\def\ba{\begin{align}}
	\def\ea{\end{align}}
\def\bp{\begin{proof}}
	\def\ep{\end{proof}}
\theoremstyle{plain}
\newtheorem{theorem}{Theorem}[section]
\newtheorem{proposition}[theorem]{Proposition}
\theoremstyle{definition}
\newtheorem{definition}[theorem]{Definition}
\newtheorem{remark}[theorem]{Remark}
	\def\bp{\begin{proof}}
		\def\ep{\end{proof}}
	\def\dirac{{\eth}}
	\def\nonind{{\mathfrak I\!\!\!/}}
\begin{document}

		\title[The scalar curvature in wedge spaces]{The scalar curvature in wedge spaces: existence and obstructions}

		\author{Levi Lopes de Lima}
		\address{Universidade Federal do Cear\'a (UFC),
			Departamento de Matem\'{a}tica, Campus do Pici, Av. Humberto Monte, s/n, Bloco
			914, 60455-760,
			Fortaleza, CE, Brazil.}
		\email{levi@mat.ufc.br}
		\thanks{
			L.L. de Lima has been supported by 
			FUNCAP/CNPq/PRONEX 00068.01.00/15.}

		\begin{abstract}
			We study the scalar curvature of incomplete wedge metrics in certain stratified spaces with a single singular stratum (wedge spaces). Building upon several well established technical tools for this category of spaces (the corresponding Yamabe, elliptic and index theories) we 
			provide existence and obstruction results for such metrics under suitable positivity assumptions on the underlying geometry. This is meant to be a follow-up to a previous paper of ours (AGAG, 2022),  where the case of spaces with an isolated conical singularity was considered.
		\end{abstract}

		\maketitle

		\tableofcontents

		\section{Introduction}\label{intro}
		
		The general problem of prescribing the scalar curvature function in a given smooth closed manifold is a central theme in Riemannian Geometry. Since in principle this metric invariant only affects the underlying geometry at a local level with no direct influence on its large scale behavior, it is expected that a huge amount of functions might be realized as the scalar curvature. In a sense this has been confirmed by Kazdan and Warner \cite{kazdan1975existence}, who showed that any function on a closed manifold of dimension $n\geq 3$ which is negative somewhere is the scalar curvature of some metric. Versions of this result in  the setting of compact manifolds with boundary have been established in \cite{cruz2019prescribing}. These contributions should be contrasted with the well-known topological obstructions to the existence of metrics with positive scalar curvature in the spin setting stemming from the works of Lichnerowicz \cite{lichnerowicz1963spineurs}, Hitchin \cite{hitchin1974harmonic} and Gromov-Lawson \cite{gromov1980spin,gromov1996positive} and relying upon the remarkable properties of the Dirac operator acting on spinors; see also \cite{bar2023boundary} for similar obstructions in the presence of a boundary.
		
		The analysis in \cite{de2022scalar} has suitably extended the aforementioned results to the category of spaces with an isolated conical singularity. The purpose of this note is to investigate further extensions of these contributions to the more general category of (incomplete) edge spaces with a single singular stratum, also named {\em wedge spaces}. The main results here are Theorems \ref{presc:ex} and \ref{obs:res:karea} dealing respectively with existence and obstructions of wedge metrics with prescribed scalar curvature on such spaces under certain positivity assumptions on the underlying geometry. In particular, we provide a unified treatment that retrieves the conical case studied in \cite{de2022scalar}. Also, as explained in Remark \ref{rosen}, the obstruction in Theorem \ref{obs:res:karea} in a sense complements those obtained in \cite{albin2016index,botv2021positive} by using similar tools (the wedge index theory in \cite{albin2016index}).
		
		In order to carry out the arguments needed to establish Theorem \ref{presc:ex}, certain aspects of the solution of the Yamabe problem in this wedge category are reviewed  in Section \ref{yamabe:ie}, following \cite{akutagawa2014yamabe} closely. 
		Another key ingredient here is the mapping theory of geometric elliptic operators on such spaces (specifically, the Laplacian acting on functions and the Dirac operator acting on spinors), which is described in Sections \ref{wedge:ell:th} and \ref{top:obst}; see, among others,  \cite{mazzeo1991elliptic,melrose1993atiyah,lesch1997differential,schulze1998boundary,grieser2001basics,egorov2012pseudo} for the general theory  and also \cite{de2022mapping} for an informal account of this topic in the conical setting. 
		This theory is used in Section \ref{wedge:ell:th} to establish good mapping properties for the scalar Laplacian. More precisely, we check here that in all cases of interest this operator is essentially self-adjoint when viewed as an unbounded and symmetric operator acting on a certain weighted Sobolev space, with the corresponding self-adjoint extension being Fredholm (Theorem \ref{self:ad:lap}). This justifies the integration by parts needed to carry out the perturbative scheme leading to Theorem \ref{presc:ex}. We remark that surjectivity would suffice in this part of the argument, but we prefer to establish the finer mapping properties for at least two reasons:  they provide a local description of the space of solutions of an associated non-linear problem (see Remark \ref{semi:lin}) and, more fundamentally, the corresponding analysis may be easily transplanted to the Dirac setting, where
		surjectivity does not suffice and both self-adjointness and Fredholmness are crucial to the proof of Theorem \ref{obs:res:karea}.

		The mapping properties mentioned above should routinely follow from results in the existing literature (as in the accounts for the Hodge Laplacian in \cite{mazzeo2012analytic,albin2022cheeger}, for instance) but since we were unable to locate a specific source carrying out the analysis for the scalar Laplacian, we supply a fairly detailed guide to the quite involved proof, as this also will allow us to keep track of the rather subtle role (the dimension of) the link manifold plays in achieving self-adjointness; see Remarks \ref{donald:2} and \ref{f:geq:3}. Among the various routes available, we have chosen to adopt as a key input here an argument employed in  \cite{albin2012signature,albin2018hodge,albin2016index} for Dirac-type operators and in \cite{albin2022cheeger} for the Hodge Laplacian, both relying on certain regularity results 
		in \cite{gil2003adjoints,gil2013closure}. This same reasoning is used in Section \ref{top:obst} to probe the mapping properties of the Dirac operator needed in the proof of Theorem \ref{obs:res:karea}. 
		Besides its relevance to the applications mentioned above, we believe that the approach we take here has an independent interest as it might be implemented in other non-linear geometric problems where Laplace-type operators show up at an infinitesimal level. 
		
		We now briefly describe the strategy behind the proofs of our results.  Regarding Theorem \ref{presc:ex}, we adapt the argument laid down in \cite{kazdan1975existence} in the smooth setting. The first step is to find a wedge metric with constant negative (say $-1$) scalar curvature (Theorem \ref{const:neg}). In the smooth category, this corresponds to the ``easy'' case of the solution of the classical Yamabe problem \cite{leeparker1987}, hence always solvable. Unfortunately, finding such a metric in the wedge category with the available technology 
		(the singular Yamabe theory in \cite{akutagawa2014yamabe}) turns out to be considerably much harder and seems to require some kind of positivity condition on the geometry of the link manifold. The simplest possibility, which suffices for the applications we have in mind, is to assume that the link metric has constant positive scalar curvature (the more general case treated in Theorem \ref{presc:ex}, which merely assumes that the link metric is Yamabe positive, may be reduced to this simpler case in view of the well-known solution of the Yamabe problem for closed manifolds \cite{schoen1984conformal,leeparker1987}; see Remark \ref{yam:pos}). In any case, with this wedge metric  (say $g$) at hand, the next step consists in showing that any function ``close'' enough to $-1$ may be realized as the scalar function of some conformal wedge metric (if this is the case, it is straightforward to check that the natural action of the group of diffeomorphisms of bounded distortion on the space of wedge metrics takes care of the general case, in which the prescribed scalar curvature function is negative somewhere; see Proposition \ref{conf:dist}). This step involves a perturbative argument relying on the invertibility of the Laplace-type operator  $-\Delta_g+\lambda$, $\lambda>0$, arising as the linerization at $g$ of the associated non-linear problem. Again, in the smooth category the invertibility of this operator in the standard Sobolev scale is well established. In our case, however, one is led to work in a Sobolev scale that takes into account the structure of the singular stratum  and it is precisely here that the mapping theory mentioned above is needed. The outcome of applying this theory to our problem appears in Theorem \ref{self:ad:lap}, confirming that in most cases the Laplacian has good mapping properties, which leads to Theorem \ref{presc:ex}. Finally, we mention that this same mapping theory is also needed to establish self-adjointness and Fredholmness for the Dirac operator (Theorem \ref{dirac:map:p}), which enables the use of the index theory from \cite{albin2016index} employed in the proof of Theorem
		\ref{obs:res:karea}.     
		
		\vspace{0.2cm}
		\noindent{\bf Acknowledgments.} The author would like to thank S. Almaraz for helpful discussions and comments.

		\section{Wedge spaces and statements of the main results}\label{setup}
		
		We start by describing the class of spaces we are interested in. For simplicity, all smooth manifolds appearing below are assumed to be oriented.
		
		\begin{definition}\label{ie:cat}
			A stratified space with a single singular stratum is a topological space $X$ satisfying:
			\begin{enumerate}
				\item $X$ is a (compact and connected) metric space with distance function $\mathfrak d$;
				\item  $X=Y \sqcup X_s$, where $X_s$ is a (open and dense) smooth manifold of dimension $n\geq 3$ (the smooth stratum of $X$);
				\item $Y$ is a (connected and boundaryless) smooth manifold of dimension $b\geq 0$ (the singular stratum);
				\item there exists a (connected) neighborhood $U$ of $Y$ (the wedge region) such that: 
				\begin{itemize}
					\item $X\backslash U$ is a smooth manifold with boundary $Y^\bullet$;
					\item there exists a retraction $\pi_Y: U\to Y$ with $\pi_Y|_{U\backslash Y}$ being smooth;
				\end{itemize}
				\item there exists a ``radial function'' $x=x_Y:U\to [0,1)$  such that $x^{-1}(0)=Y$ and with $x|_{U\backslash Y}$ being smooth;
				\item  $\pi_Y$ is a locally trivial fibration whose typical fiber is the (truncated) cone $C_Z=[0,1)\times Z/\!\!\sim$ over  a closed connected manifold $Z$ (here, $\sim$ means that $\{0\}\times Z$ has been collapsed into a point), with atlas $(\phi,\mathcal V)$, where each $\phi:\pi_Y^{-1}(\mathcal V)\to \mathcal V\times C_Z$ is a trivialization and the corresponding transition functions  preserve $x$ (so that, restricted to each fiber, $x\circ \phi^{-1}$ corresponds to the natural projection $C_Z\to [0,1)$).
			\end{enumerate}
		\end{definition}
		
		In particular, 	for each $x\in (0,1)$ one has a submersion $x_Y^{-1}(x) \to Y$ whose typical fiber is $Z$, so if we send $x\to 1$ we obtain a submersion $Y^\bullet\to Y$ again with typical fiber $Z$. Hence, the manifold $X\backslash U$ has a ``fibred boundary'' $Y^\bullet$ which may be viewed as the ``resolution'' of the singular locus $Y$.
		
		We assume that $x$ has been smoothly extended to $X_s$ so that $x|_{X_s\backslash U}\equiv 1$.
		Notice that
		\[
		n=b+f+1,
		\]
		where $f$ is the dimension of $Z$. 
		Due to our use of the singular Yamabe theory in \cite{akutagawa2014yamabe}, in the following we will make the key assumption
		\begin{equation}\label{dim:asump}
			\boxed{b\leq n-2 \Longleftrightarrow f\geq 1}
		\end{equation}

		We now introduce the appropriate geometric data in $X$ (more precisely, in the smooth stratum $X_s=X\backslash Y$). For simplicity we assume that $\pi_Y$ is trivial and we fix a trivializing chart providing an identification
		\[
		U\simeq Y\times C_Z\simeq Y\times \left([0,1)\times Z/\!\sim\right),
		\] 
		with $x$ corresponding to the projection onto the $[0,1)$-factor. Locally around each cone fiber over a fixed fiber of the submersion $Y^\bullet\to X$ we may introduce coordinates $(x,y,z)$, where $(y,z)$ are (local) coordinates on $Y\times Z$.
		This allows us to consider a class of adapted metrics on $X$.

		\begin{definition}\label{edge:met}
			An (incomplete) wedge metric in a stratified space $X$ as above is a Riemannian metric $g$ on its smooth stratum $X_s$
			such that:
			\begin{enumerate}
				\item there holds
				\begin{equation}\label{ie:met}
					g|_U(x,y,z)=dx^2+x^2g_Z(z)+g_Y(y)+o(x), \quad x\to 0,
				\end{equation}
				where 
				$g_Y$ and $g_Z$ are fixed metrics in $Y$ and $Z$, respectively;
				\item the distance induced by $g$ coincides with $\mathfrak d|_{X_s}$.
			\end{enumerate}
		\end{definition}
		
		\begin{remark}\label{read:simp}
			Under suitable, but still quite restrictive, assumptions we may also treat the case in which the metric $g_Z$ varies with $y\in Y$ in a smooth way. For instance, we may assume more generally that the family of varying metrics $g_Z(y,\cdot)$, is {isospectral}. For simplicity, however, we prefer to avoid this complication.
		\end{remark}
		
		\begin{definition}\label{wedge:def}
			A pair $(X,g)$ as above is a {\em wedge space} and 
			the closed Riemannian manifold $(Z,g_Z)$ is the {\em link} of the singular stratum $Y$. In the {\em cone-edge} case $f=1$, the length of the linking circle is the {\em cone angle}.
		\end{definition}
		
		We denote by $d{\rm vol}_g$ the volume element associated to $g$. We may  define Sobolev spaces $H^{k}(X,d{\rm vol}_g)$, $k\geq 0$ an integer, in the usual way (just take the closure of Lipschitz functions under the usual Sobolev norm induced by $d{\rm vol}_g$). 
		If $f\geq 3$ we recall that the {\em conformal Laplacian} of $(Z,g_Z)$ is the elliptic operator 
		\[
		\mathcal L^{f}_{g_Z}= -\Delta_{g_Z}+\frac{f-2}{4(f-1)}R_{g_Z}.
		\]
		Here and in the following, $\Delta$ stands for the Laplacian and $R$ for the scalar curvature. 
		
		\begin{definition}\label{yam:pos:def}
			We say that $(Z,g_Z)$ is {\em Yamabe positive} if $\mathcal L^{f}_{g_Z}$ is positive definite (viewed as a self-adjoint operator acting on $L^2(Z,d{\rm vol}_{g_Z})$).	
		\end{definition}
		
		\begin{remark}\label{yam:pos}
			It is known that
			Yamabe positivity is a {\em conformal} property of $g_Z$: it is equivalent to the conformal class $[g_Z]$ of $g_Z$ carrying a metric with positive scalar curvature \cite{leeparker1987}. If this is the case then the solution of the Yamabe problem for closed manifolds \cite{schoen1984conformal,leeparker1987} allows us to replace $g_Z$ by a Yamabe metric $g'_Z\in[g_Z]$, the conformal class of $g_Z$, without affecting the wedge character of the underlying metric (in fact, the corresponding wedge metrics are easily seen to be quasi-isometric to each other, with the quasi-isometry bounds depending only on bounds on the conformal factor). In particular, after a further rescaling of the link we may assume that
			$R_{g'_Z}=f(f-1)$. This quasi-isometric replacement of wedge metrics, induced by conformal deformations on the link, is crucial here since only for $g'_Z$ as a link metric we may use Theorem \ref{const:neg}, which relies on Theorem \ref{summ}, to find a wedge metric with constant negative scalar curvature to which the perturbative scheme leading to the proof of Theorem \ref{presc:ex} below may be applied. 
		\end{remark}

		We may now state our first result, which provides an existence theorem for wedge metrics with prescribed scalar curvature under appropriate assumptions on the link. 
		
		\begin{theorem}\label{presc:ex}
			Let $(X,g)$ be a wedge space whose link $(Z,g_Z)$ satisfies either $f=1$ or $f\geq 3$ and it is Yamabe positive.
			Then any smooth and bounded function which is negative  somewhere in $X_s\backslash U$ is the scalar curvature of some wedge metric on $X$.
		\end{theorem}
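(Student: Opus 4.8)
The plan is to follow the Kazdan--Warner scheme \cite{kazdan1975existence}, adapted to the wedge category, in three stages: produce a background wedge metric of constant scalar curvature $-1$; realize every function sufficiently close to $-1$ as the scalar curvature of a \emph{conformal} wedge deformation by means of the implicit function theorem; and finally upgrade this local statement to all functions that are negative somewhere by exploiting the action of diffeomorphisms of bounded distortion. By Remark \ref{yam:pos}, after a conformal change on the link (which alters $g$ only up to quasi-isometry and hence preserves its wedge character) we may assume that $(Z,g_Z)$ has constant scalar curvature $R_{g_Z}=f(f-1)>0$; this is precisely the hypothesis under which Theorem \ref{const:neg} (via Theorem \ref{summ}) applies and furnishes a wedge metric $g_0$ on $X$ with $R_{g_0}\equiv -1$.

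I would then seek the desired metric in the conformal class of $g_0$, writing $g=u^{4/(n-2)}g_0$ with $u>0$ and using the transformation law
\[
R_g = u^{-\frac{n+2}{n-2}}\left(-\frac{4(n-1)}{n-2}\Delta_{g_0}u + R_{g_0}\,u\right).
\]
Prescribing $R_g=h$ thus amounts to solving the semilinear equation
\[
-\frac{4(n-1)}{n-2}\Delta_{g_0}u - u = h\,u^{\frac{n+2}{n-2}},
\]
for which the constant $u\equiv 1$ is a solution when $h\equiv -1$. Linearizing the left-hand side minus the right-hand side at $(u,h)=(1,-1)$ yields exactly
\[
-\frac{4(n-1)}{n-2}\Delta_{g_0}v + \frac{4}{n-2}\,v \;=\; \frac{4(n-1)}{n-2}\left(-\Delta_{g_0}+\frac{1}{n-1}\right)v,
\]
an operator of the form $-\Delta_{g_0}+\lambda$ with $\lambda=\tfrac{1}{n-1}>0$. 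The central analytic input is Theorem \ref{self:ad:lap}: for $f=1$ or $f\ge 3$ the scalar Laplacian is essentially self-adjoint with Fredholm self-adjoint extension on the appropriate weighted Sobolev scale. Being self-adjoint and strictly positive, $-\Delta_{g_0}+\lambda$ is injective with closed range; together with the Fredholm property (which forces index zero) this makes it an isomorphism of the relevant weighted Sobolev spaces. I would then apply the implicit function theorem to the nonlinear map---after verifying it is $C^1$, which requires the multiplication estimates making the weighted scale a Banach algebra and the smoothness of $u\mapsto u^{(n+2)/(n-2)}$ on positive functions---to produce, for every $h$ in a suitable neighborhood of $-1$, a solution $u$ close to $1$. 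Since $u$ is then $C^0$-close to $1$ it is positive and the deformed metric continues to satisfy the wedge asymptotics \eqref{ie:met}, so every such $h$ is realized as a scalar curvature.

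For a general bounded $h$ that is negative somewhere in $X_s\backslash U$, I would invoke Proposition \ref{conf:dist}. Since $R_{\psi^* g}=R_g\circ\psi$ and a diffeomorphism $\psi$ of bounded distortion carries wedge metrics to wedge metrics, it suffices to realize some function in the orbit $\{h\circ\psi\}$ and then pull back by $\psi^{-1}$. Concentrating, by such a $\psi$ supported away from the singular stratum, a neighborhood of a point of $X_s\backslash U$ where $h<0$ onto a set of nearly full volume brings $h\circ\psi$ into the admissible perturbative regime of the previous step; the resulting conformal wedge metric, pulled back by $\psi^{-1}$, then has scalar curvature exactly $h$.

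I expect the main obstacle to lie in the functional-analytic bookkeeping of the second stage: one must fix a single weighted Sobolev scale on which, simultaneously, (i) Theorem \ref{self:ad:lap} delivers the isomorphism property of $-\Delta_{g_0}+\lambda$, (ii) the power nonlinearity is well defined and continuously differentiable, and (iii) the solution $u$ is bounded above and below so that $g=u^{4/(n-2)}g_0$ is a genuine wedge metric obeying \eqref{ie:met}. The delicate dependence on the link dimension---the dichotomy $f=1$ versus $f\ge 3$ and the exclusion of $f=2$---enters the argument only through these two cited results, namely the self-adjointness in Theorem \ref{self:ad:lap} and the existence of the constant-curvature background in Theorem \ref{const:neg}; once they are in force, the remaining work is the assembly via the implicit function theorem and the bounded-distortion reduction, checking throughout that one never leaves the wedge category.
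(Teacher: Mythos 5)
Your proposal follows essentially the same route as the paper: Theorem \ref{const:neg} (after the conformal normalization of the link in Remark \ref{yam:pos}) supplies the background wedge metric with $R\equiv -1$, the linearization of the conformal scalar-curvature map at $u\equiv 1$ is identified as $c_n^{-1}\bigl(-\Delta_{g}+\tfrac{1}{n-1}\bigr)$ and inverted via the self-adjointness and Fredholmness of Theorem \ref{self:ad:lap}, and the general case is reduced to the perturbative one by the bounded-distortion diffeomorphisms of Proposition \ref{conf:dist}. The only detail you elide is that before invoking Proposition \ref{conf:dist} one must first multiply the prescribed function $h$ by a constant $K>0$ so that $K\min h<-1<K\max h$ (realizing $K h\circ\Psi$ and rescaling the resulting metric afterwards), since that proposition only produces functions $h\circ\Psi$ close to targets lying strictly between $\min h$ and $\max h$.
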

		
		\begin{remark}\label{donald:1}
			The Yamabe positivity of $(Z,g_Z)$ in case $f\geq 3$ relates to the already mentioned difficulty in using the singular Yamabe theory in \cite{akutagawa2014yamabe} to find a background wedge metric with constant negative scalar curvature to which the perturbative scheme in Section \ref{resc:scalar} could be applied (see Theorem \ref{const:neg} (3)).
			As mentioned in Remark \ref{yam:pos},
			after possibly passing to a Yamabe metric $g'_Z$ in the conformal class $[g_Z]$ of $g_Z$ satisfying $R_{g'_Z}=f(f-1)$, Yamabe positivity suffices to carry out the proof of Theorem \ref{presc:ex}. 
			In any case, the fact that the class of closed manifolds of dimension $f\geq 3$ carrying metrics with positive scalar curvature is stable under surgeries of co-dimension at least $3$  \cite{gromov1980classification,schoen1979structure} provides many examples of wedge spaces to which Theorem \ref{presc:ex} applies.
			On the other hand, if $f=2$ then Yamabe positivity morally corresponds to asking that the link surface $Z$ is (topologically) a sphere, precisely the case covered by Theorem \ref{const:neg} (2); see also Remark \ref{quasi:smooth}. But notice that now $(Z,g'_{Z})$ is a {\em round} sphere, which means that the original wedge manifold is quasi-isometric (in the ``conformal'' sense of Remark \ref{yam:pos}) to a {\em smooth} manifold, in which case the conclusion of Theorem \ref{presc:ex} already follows from \cite{kazdan1975existence}. Of course, this justifies the omission of  this case in Theorem \ref{presc:ex}. Finally, as it is apparent from Theorem \ref{const:neg} (1), if $f=1$ the existence of a Yamabe wedge metric with negative scalar curvature can always be taken for granted.
		\end{remark}

		\begin{remark}\label{donald:2}
			As far as essential self-adjointness of the Laplacian operator is concerned, the analytic machinery employed in Section \ref{wedge:ell:th} works fine universally (that is, with no restriction on the link) if $f\geq 3$, but it does not seem to deliver this specific mapping property 
			if $f=2$; see Remarks \ref{f:2} and \ref{f:geq:3}. Note that mere surjectivity would suffice for our purpose of extending Theorem \ref{presc:ex} to this case but this is not needed here anyway due to the fact that, as explained in Remark \ref{donald:1}, the existence of a Yamabe wedge metric forces the surface link to be a sphere, in which case the result follows from \cite{kazdan1975existence}.
			On the other hand, the cone-edge case ($f=1$) is treated here and, as expected, self-adjointness is fulfilled if the cone angle is at most $2\pi$. Again, this may always be achieved by a quasi-isometry of the underlying wedge space induced by a rescaling of the circle link (as in Remark \ref{yam:pos}), which suffices for our purposes. 			
			For other instances of the usage of this ``at most $2\pi$'' condition in the cone-edge setting, with  relevant applications to rigidity phenomena in Geometry, we refer to \cite{hodgson1998rigidity,mazzeo2011infinitesimal,donaldson2012kahler,li2019positive,cheng2021singular}.  
		\end{remark}

		The existence results in Theorem \ref{presc:ex} should be compared with our next contribution, which provides topological obstructions to the existence of wedge metrics with (strictly) positive scalar curvature on certain {\em spin} wedge spaces. For the notion of infinite $K$-area, see Definition \ref{k:area:def}.   
		
		\begin{theorem}\label{obs:res:karea}
			Let $(X^{n},g)$, $n=2k$, be a {\em spin} wedge manifold with infinite $K$-area and whose link satisfies either
			$1<f\leq n-1$ or $f=1$ and the cone angle is at most $2\pi$. 
			Then $X$ carries no wedge metric with (strictly) positive scalar curvature (in the given quasi-isometry class of wedge metrics). Also, the same conclusion holds true for $n$ odd if $X\times \mathbb S^1$ has infinite $K$-area.  
		\end{theorem}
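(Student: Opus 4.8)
The plan is to run the classical Lichnerowicz--Gromov--Lawson argument in the wedge category, using the wedge index theory of Albin--Leichtnam--Mazzeo--Piazza \cite{albin2016index} as the substitute for the smooth index theorem, and the mapping properties of the Dirac operator recorded in Theorem \ref{dirac:map:p} (self-adjointness and Fredholmness under the stated link/cone-angle hypotheses) as the analytic input. Since $X$ is spin, it carries a spinor bundle $S$ and a Dirac operator $\dirac$ for which the Lichnerowicz formula $\dirac^2=\ovn^*\ovn+\tfrac14 R_g$ holds on $X_s$. The key point is that the hypotheses $1<f\le n-1$, or $f=1$ with cone angle at most $2\pi$, are precisely the conditions guaranteeing that $\dirac$ is essentially self-adjoint with the Fredholm closed extension furnished by Theorem \ref{dirac:map:p}, so that a genuine Fredholm index of $\dirac$ is available.

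First I would argue by contradiction: suppose $X$ admits a wedge metric $g'$ (in the given quasi-isometry class) with $R_{g'}>0$. Working with $g'$, the Lichnerowicz formula combined with the positivity of $R_{g'}$ and an integration by parts — justified precisely because $\dirac$ is self-adjoint on the relevant weighted Sobolev space, so boundary terms at the singular stratum vanish — shows that $\dirac$ has trivial kernel (and, by self-adjointness, trivial cokernel). Hence the Fredholm index of $\dirac$ vanishes. The next step is to contradict this by producing a nonzero index, and this is where the infinite $K$-area hypothesis enters: by twisting $\dirac$ with a sequence of Hermitian bundles $E_j$ carrying connections of arbitrarily small curvature but nontrivial topology (the defining property of infinite $K$-area, Definition \ref{k:area:def}), the twisted operators $\dirac_{E_j}$ satisfy a twisted Lichnerowicz formula $\dirac_{E_j}^2=\ovn^*\ovn+\tfrac14 R_{g'}+\mathcal R^{E_j}$, where the curvature term $\mathcal R^{E_j}$ can be made pointwise smaller than $\tfrac14\inf R_{g'}$. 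This again forces $\dirac_{E_j}$ to be invertible and its index to vanish; but the wedge index theorem of \cite{albin2016index} computes this index as a sum of a bulk $\hat A$-type term against the Chern character of $E_j$ plus an edge/eta contribution, and for the infinite $K$-area witnesses the topological term is nonzero, yielding the contradiction.

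The main obstacle, and the step deserving the most care, is the index-theoretic bookkeeping at the singular stratum: one must verify that the edge correction term (the eta-type defect coming from the boundary family over $Y$) in the ALMP formula is \emph{independent} of the twisting bundle $E_j$, or at least does not absorb the growing topological term, so that the nonvanishing of the $K$-area characteristic number genuinely survives into the index. This is the delicate point where the role of $f$ (equivalently $b=n-1-f$) and the ``at most $2\pi$'' condition for $f=1$ reappear, since they control which self-adjoint extension is Fredholm and thus which index theorem applies. For the odd-dimensional case I would pass to $X\times\Ss^1$, observe that a positive-scalar-curvature wedge metric on $X$ induces one on the product (which is again a wedge space with the same link), and apply the even-dimensional conclusion to the infinite-$K$-area manifold $X\times\Ss^1$, noting that $\dim(X\times\Ss^1)=n+1$ is even and the link is unchanged. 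The quasi-isometry invariance asserted in the statement follows because both the self-adjoint domain of $\dirac$ and the index are preserved under quasi-isometries with bounded distortion, exactly as in Remark \ref{yam:pos}.
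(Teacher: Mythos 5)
Your proposal is correct and follows essentially the same route as the paper: Theorem \ref{dirac:map:p} supplies the self-adjoint Fredholm Dirac operator, positive scalar curvature kills the (twisted) index via the Lichnerowicz argument, the index formula of \cite{albin2016index} together with the infinite $K$-area bundles produces the contradiction, and the odd-dimensional case is handled by passing to $X\times\mathbb S^1$. The one point you flag as delicate --- independence of the eta/edge correction from the twisting bundle --- is already resolved by Definition \ref{k:area:def}: the witnesses are required to be \emph{admissible} (trivial and flat over the wedge region $U$), so $\dirac\otimes\mathcal E|_U$ is a sum of $r$ copies of $\dirac|_U$, the edge terms cancel against those of $\Theta^r$ in the virtual-bundle formula (\ref{ind:albin}), and only the bulk term $\int_{X_s}\widehat A(TX_s)\wedge{\rm ch}\,\widetilde{\mathcal E}$ survives.
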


		\begin{remark}\label{case:bd}
			The arguments presented in \cite[Sections 2 and 5.2]{de2022scalar} may be easily adapted to provide versions of the theorems above in case the wedge space carries a boundary $\partial X$ disjoint from the wedge region $U$: one gains minimality of  $\partial X$ in the analogue of Theorem \ref{presc:ex} and should require mean convexity of $\partial X$ for the analogue of Theorem \ref{obs:res:karea}.  
		\end{remark}

		\begin{remark}\label{rosen}
			The obstruction in Theorem \ref{obs:res:karea} may be thought of as being ``complementary'' to \cite[Theorem 1.3]{albin2016index} and \cite[Theorem 2.3]{botv2021positive} in a sense that we now discuss. There exist two fundamental lines of inquiry concerning obstructions to the existence of metrics of positive scalar curvature in the spin category, which in a sense reflect the quite diverse topological contributions to the Atiyah-Singer index formula for Dirac operators. In the untwisted case, the only contribution comes from the tangent bundle, which has been explored by Lichnerowicz \cite{lichnerowicz1963spineurs} to check that the $\widehat A$-genus obstructs such metrics; this line of thought has been refined by Hitchin \cite{hitchin1974harmonic}, with the corresponding obstruction coming from $KO$-theory. On the other hand, it has been shown by Gromov and Lawson \cite{gromov1980classification,gromov1980spin,gromov1983positive} that twisting the Dirac operator with almost flat vector bundles leads to a ``complementary'' obstruction for enlargeable manifolds (for example, this works for tori, whose $\widehat A$-classes are trivial).  
			Later on, Gromov \cite{gromov1996positive} was able to somehow quantify this latter proposal by  means of the notion of ``infinite $K$-area'' 
			(compare with Definition \ref{k:area:def}). Hence, whereas the obstructions in \cite{albin2016index,botv2021positive} referred to above provide versions of the Lichnerowicz-Hitchin approach in the wedge category, Theorem \ref{obs:res:karea} aligns with  Gromov's philosophy.
		\end{remark}
		
		\section{The Yamabe problem in the wedge setting}\label{yamabe:ie}
		
		A first step towards the proof of Theorem \ref{presc:ex} involves constructing a wedge metric with {\em constant negative} scalar curvature on the given wedge space. Here we explain how this result (Theorem \ref{const:neg} below) follows from the singular Yamabe theory developed in \cite{akutagawa2014yamabe}.

		Given a wedge space $(X,g)$ as in Definition \ref{wedge:def}, we denote by $[g]_{\rm w}$ the space of wedge metrics $\widetilde g$ in $X$ which are conformal to $g$ (in the sense that there exists $u:X_s\to \mathbb R$ smooth and positive such that $\widetilde g=u^{\frac{4}{n-2}}g$). The corresponding Yamabe problem asks: there exists  $\widetilde g\in[g]_{\rm w}$ with the property that its scalar curvature $R_{\widetilde g}$ is constant?
		
		As in the smooth case, this admits a variational formulation. We fix a background wedge metric $g$ in the given conformal class and  consider the quadratic form $\mathcal Q:H^{1}(X,d{\rm vol}_{g})\to \mathbb R$, 
		\begin{equation}\label{yam:func}
			\mathcal Q(u)=\int_X\left(|du|^2+c_nR_{g}u^2\right)d{\rm vol}_{g},\quad c_n=\frac{n-2}{4(n-1)}, 
		\end{equation}
		and the constraint sphere
		\[
		\mathscr B_{n^*}=\{u\in H^{1}(X,d{\rm vol}_{g});\|u\|_{{n^*}}=1\}, \quad n^*=\frac{2n}{n-2}.
		\]
		We note that  $C^\infty_{\rm cpt}(X_s)\subset H^{1}(X,d{\rm vol}_g)$ densely, which is a consequence of the dimensional assumption (\ref{dim:asump});  see \cite[Section 2.2]{akutagawa2014yamabe}. This justifies the
		integration by parts leading to the following result.
		
		\begin{proposition}\label{var:yam}
			Critical points of $\mathcal Q|_{\mathscr B_{n^*}}$ precisely correspond to (weak) solutions of 
			\begin{equation}\label{crit:pde}
			\mathcal L_{g}u=\mu u^{\frac{n+2}{n-2}}, \quad \mu\in\mathbb R,
			\end{equation}
			where
			\[
			\mathcal L_{g}:=-\Delta_{g}+c_nR_{g}
			\]
			is the {\em conformal Laplacian} (here, $\Delta_{g}$ is the Laplacian of the background metric $g$). In particular, if $u$ is smooth and further satisfies $0<c^{-1}\leq u(x,y,z)\leq c$ for some $c>0$ and $(x,y,z)\in U$ then $\widetilde g:=u^{\frac{4}{n-2}}g$ is a solution of the corresponding Yamabe problem (in the conformal class $[g]_{\rm w}$).  
		\end{proposition}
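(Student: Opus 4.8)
The plan is to treat \eqref{crit:pde} as the Euler--Lagrange equation of the constrained problem of finding critical points of $\mathcal Q$ on $\mathscr B_{n^*}$, via the Lagrange multiplier rule, exactly as in the smooth Yamabe problem. The single nonsmooth feature — the presence of the singular stratum $Y$ — has already been absorbed into the cited density statement $C^\infty_{\rm cpt}(X_s)\subset H^{1}(X,d{\rm vol}_g)$, which is precisely what legitimizes the integrations by parts performed below; with that input in hand the argument reduces to a routine verification.

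First I would record that $\mathcal Q$ and the constraint functional $G(u):=\int_X|u|^{n^*}d{\rm vol}_g$ are of class $C^1$ on $H^{1}(X,d{\rm vol}_g)$, the only delicate point being the finiteness and differentiability of the potential term $\int_X R_g u^2\,d{\rm vol}_g$ and of $G$; both follow from the embedding $H^{1}\hookrightarrow L^{n^*}$ and the mapping properties established in \cite{akutagawa2014yamabe} under the dimensional hypothesis \eqref{dim:asump}. Their Fr\'echet derivatives are
\begin{equation*}
D\mathcal Q(u)[\varphi]=2\int_X\left(\langle du,d\varphi\rangle+c_nR_gu\varphi\right)d{\rm vol}_g,\qquad DG(u)[\varphi]=n^*\int_X|u|^{n^*-2}u\,\varphi\,d{\rm vol}_g.
\end{equation*}
Since $DG(u)[u]=n^*\|u\|_{n^*}^{n^*}=n^*\neq 0$ on $\mathscr B_{n^*}$, the constraint sphere is a $C^1$ hypersurface and the Lagrange multiplier rule applies.

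Next I would invoke the density of $C^\infty_{\rm cpt}(X_s)$ to integrate by parts in the first derivative, identifying $D\mathcal Q(u)[\varphi]=2\int_X(\mathcal L_g u)\varphi\,d{\rm vol}_g$ in the weak sense. A point $u\in\mathscr B_{n^*}$ is then critical for $\mathcal Q|_{\mathscr B_{n^*}}$ if and only if $D\mathcal Q(u)=\nu\,DG(u)$ for some $\nu\in\mathbb R$, i.e.
\begin{equation*}
\int_X\left(\langle du,d\varphi\rangle+c_nR_gu\varphi\right)d{\rm vol}_g=\mu\int_X|u|^{n^*-2}u\,\varphi\,d{\rm vol}_g,\qquad\mu:=\tfrac{n^*}{2}\nu,
\end{equation*}
for all test functions $\varphi$; since $n^*-1=\frac{n+2}{n-2}$ this is exactly the weak form of \eqref{crit:pde}, and the computation reverses verbatim, yielding the claimed correspondence in both directions.

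Finally, for the ``in particular'' clause, given a smooth solution $u$ with $0<c^{-1}\le u\le c$ on $U$, I would first argue that $\widetilde g=u^{\frac{4}{n-2}}g$ is again a wedge metric: the uniform two-sided bound on $u$ over the wedge region guarantees that the defining asymptotics \eqref{ie:met} (hence the quasi-isometry type) are preserved. Then the standard conformal transformation law $R_{\widetilde g}=\frac{4(n-1)}{n-2}\,u^{-\frac{n+2}{n-2}}\mathcal L_g u$, combined with \eqref{crit:pde}, collapses to $R_{\widetilde g}=\frac{4(n-1)}{n-2}\mu$, a constant, so $\widetilde g$ solves the Yamabe problem in $[g]_{\rm w}$. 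The only genuinely nonstandard obstacle in the whole argument is the justification of the integration by parts across $Y$ — equivalently, the absence of a boundary contribution from the singular stratum — which is exactly the content of the cited density result and is where \eqref{dim:asump} enters.
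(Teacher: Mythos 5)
Your proposal is correct and follows exactly the route the paper intends: the paper gives no separate proof of this proposition, stating only that the density of $C^\infty_{\rm cpt}(X_s)$ in $H^{1}(X,d{\rm vol}_g)$ (a consequence of \eqref{dim:asump}) ``justifies the integration by parts leading to the following result,'' and your Lagrange-multiplier computation together with the conformal transformation law $R_{\widetilde g}=c_n^{-1}u^{-\frac{n+2}{n-2}}\mathcal L_g u$ is precisely that argument. The only overstatement is the claim that the two-sided bound on $u$ by itself preserves the asymptotics \eqref{ie:met} --- it only yields quasi-isometry, and the paper defers the verification that the Yamabe metric is genuinely of wedge form to the regularity theory invoked in Remark \ref{rem:reg:1}.
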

		
		The preferred way to produce critical points for $\mathcal Q|_{\mathscr B_{n^*}}$ is by minimization. Unfortunately, the Direct Method in the Calculus of Variations can not be applied here due to the 
		fact that the continuous embedding 
		\[
		H^{1}(X,d{\rm vol}_{g})\subset L^{p}(X,d{\rm vol}_{g})
		\]
		is only  compact for $p<n^*$ \cite[Proposition 1.6]{akutagawa2014yamabe}. 
		Thus, a minimizer, if it exists, must be located by alternative methods. Also, from past experience with the smooth case fully discussed in  \cite{leeparker1987}, we expect that a minimizer should exist only in case the ``total energy'' of $[g]_{\rm w}$, as measured by the {\em global Yamabe invariant}
		\begin{equation}\label{yam:const}
			\mathcal Y_{\rm glo}(X,[g]_{\rm w}):=\inf_{u\in\mathscr B_{n^*}}\mathcal Q(u),
		\end{equation}
		which is a conformal invariant of $(X,g)$,
		lies below  a certain threshold value (this is just a manifestation of the ubiquitous bubbling off phenomenon characteristic of conformally invariant problems).  A major contribution in \cite{akutagawa2014yamabe} is precisely to identify this critical threshold. 
		
		To explain this latter point we consider, for each $V\subset X$ open,
		\[
		\mathscr	Y(V)=\inf\left\{\int_V\left(|du|^2+c_nR_{g}u^2\right)d{\rm vol}_{g};u\in H^{1}_0(V\cap X_s);\|u\|_{n^*}=1\right\}.
		\]  
		In particular, by (\ref{yam:const}),
		\[
		\mathscr	Y(X)=\mathcal Y_{\rm glo}(X,[g]_{\rm w}). 
		\]
		Notice that in principle we might have $\mathscr Y(X)=-\infty$  (in other words, $\mathcal Q|_{\mathcal B_{n^*}}$ might not be bounded from below).

		\begin{definition}\label{sob:yam:loc}
			The {\em local Yamabe invariant} of $(X,[g]_{\rm w})$ is given by 
			\[
			\mathcal Y_{\rm loc}(X,[g]_{\rm w})=\inf_{x\in X}\lim_{r\to 0}\mathscr Y(B_r(x)).
			\]
		\end{definition}
		
		\begin{remark}\label{upper:loc:yam}
			There always holds $\mathcal Y_{\rm loc}(X,[g]_{\rm w})\leq \mathcal Y_n$, 
			where $\mathcal Y_n$ is the Yamabe invariant of the round metric in the unit sphere $\mathbb S^{n}$, which follows from the fact that
			\begin{equation}\label{smooth:yam}
				\lim_{r\to 0}\mathscr Y(B_r(x))=\mathcal Y_n, \quad x\in X_s.
			\end{equation}
		\end{remark}
		
		Although some progress has been made in this regard \cite{mondello2017local,akutagawa2022non}, the actual computation of the local  Yamabe invariant is notoriously hard, the reason being that,
		amazingly enough, it depends {\em globally} on the link $(Z,g_Z)$. Precisely, 
		\begin{eqnarray*}
			\mathcal Y_{\rm loc}(X,[g]_{\rm w})
			& = & \mathcal Y(\mathbb R^{b}\times C_Z,[dy^2+dx^2+x^2g_Z])\\
			& = & 
			\mathcal Y(\mathbb H^{b+1}\times Z,[g_{\rm hyp}+g_Z]),
		\end{eqnarray*}
		where $(\mathbb H^{b+1},g_{\rm hyp})$ is hyperbolic space and $\mathcal Y$ denotes the standard Yamabe invariant. 
		Nonetheless, some of its qualitative properties may be established as a consequence of certain  integrability conditions on the scalar curvature of the background wedge metric, which also imply that $\mathcal Q|_{\mathcal B_{n^{*}}}$ is bounded from below.

		\begin{theorem}\label{equal:y:s}\cite{akutagawa2014yamabe}
			Assume that either
			\begin{equation}\label{int:sc}
				R_{g}\in L^q(X_s,d{\rm vol}_{g}), \quad {\rm for}\,{\rm some}\,q>n/2,
			\end{equation}
			or
			\begin{equation}\label{int:sc:2}
				\sup_{r>0}r^{q-n}\int_{B_r(x)}|R_g|^q d{\rm vol}_g\leq C,\quad {\rm for}\,{\rm some}\,q>1\,{\rm and}\,{\rm all}\,x\in X.
			\end{equation}
			Then $\mathcal Y_{\rm loc}(X,[g]_{\rm w})>0$ and
			$\mathcal Y_{\rm glo}(X,[g]_{\rm w})>-\infty$.
		\end{theorem}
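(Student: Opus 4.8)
The plan is to deduce both conclusions from a single mechanism: a Sobolev inequality adapted to the wedge geometry, into which the scalar-curvature term of $\mathcal Q$ is absorbed as a lower-order perturbation by means of the integrability hypotheses (\ref{int:sc}) or (\ref{int:sc:2}). The key preliminary input is that, since $(X,g)$ is compact with finite total volume and, by (\ref{dim:asump}), $C^\infty_{\rm cpt}(X_s)$ is dense in $H^{1}(X,d{\rm vol}_g)$ (see \cite{akutagawa2014yamabe}), one has a global Sobolev inequality
\[
\|u\|_{L^{n^*}}^2\le A\int_X|du|^2\,d{\rm vol}_g+B\int_X u^2\,d{\rm vol}_g,\qquad u\in H^{1}(X,d{\rm vol}_g),
\]
and, what matters for the local statement, a \emph{homogeneous} (scale-invariant) Sobolev inequality $\|u\|_{n^*}^2\le A_0\|du\|_2^2$ on the model wedge cone $\mathbb R^{b}\times C_Z$, with a constant $A_0$ depending only on the link geometry. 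This last inequality is where $(Z,g_Z)$ enters and where $f\ge 1$ is essential.

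For the global bound I would argue as follows. Writing $q'=q/(q-1)$, the hypothesis $q>n/2$ gives $2q'<n^*$, so on the finite-volume space $X$ Hölder yields $\|u\|_{2q'}\le {\rm vol}(X)^{\beta}\|u\|_{n^*}$ with $\beta=\tfrac{1}{2q'}-\tfrac{1}{n^*}>0$. Hence, for $u\in\mathscr B_{n^*}$,
\[
\mathcal Q(u)\ge\int_X|du|^2\,d{\rm vol}_g-c_n\|R_g\|_{L^q}\|u\|_{2q'}^2\ge-c_n\|R_g\|_{L^q}\,{\rm vol}(X)^{2\beta},
\]
a finite lower bound, so that $\mathcal Y_{\rm glo}(X,[g]_{\rm w})>-\infty$. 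Under (\ref{int:sc:2}) one replaces this global Hölder estimate by its scale-invariant Morrey counterpart, the outcome being the same.

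For the local positivity I would treat the two types of points separately. At a smooth point $x\in X_s$ the limit is $\mathcal Y_n>0$ by (\ref{smooth:yam}). At a point $x$ of the singular stratum I would rescale $B_r(x)$ to unit size: the metric then converges to the model cone metric, so $B_r(x)$ supports $\|u\|_{n^*}^2\le(A_0+o(1))\|du\|_2^2$, while the curvature coefficient $\varepsilon(r):=c_n\|R_g\|_{L^q(B_r(x))}\,{\rm vol}(B_r(x))^{2\beta}$ tends to $0$ by absolute continuity of the integral. Therefore
\[
\mathscr Y(B_r(x))\ge\inf_{\|u\|_{n^*}=1}\Big(\|du\|_2^2-\varepsilon(r)\Big)\ge\frac{1}{A_0+o(1)}-\varepsilon(r)\xrightarrow[r\to0]{}\frac{1}{A_0}>0,
\]
and taking the infimum over $x\in X$ gives $\mathcal Y_{\rm loc}(X,[g]_{\rm w})>0$; under (\ref{int:sc:2}) the coefficient $\varepsilon(r)$ is instead controlled directly in scale-invariant form.

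The hard part, I expect, is precisely the homogeneous Sobolev inequality on the model wedge cone, together with the assertion that its constant does not degenerate as $B_r(x)$ collapses onto the singular stratum. Establishing this uniform inequality, and checking that the Morrey exponent in (\ref{int:sc:2}) is calibrated to the Sobolev scaling so that $\varepsilon(r)$ is genuinely subcritical, is the crux; the remaining steps are routine Hölder interpolation and absorption. For the Sobolev-theoretic input I would follow \cite{akutagawa2014yamabe} closely.
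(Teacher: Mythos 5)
First, a point of comparison: the paper does not prove Theorem \ref{equal:y:s} at all --- it is quoted from \cite{akutagawa2014yamabe} --- so your proposal can only be measured against the argument in that reference. Your mechanism (a Sobolev inequality for $H^{1}(X,d{\rm vol}_g)$ adapted to the wedge geometry, which is where (\ref{dim:asump}) and the link enter, followed by absorption of the scalar-curvature term) is indeed the one used there, and your treatment of hypothesis (\ref{int:sc}) is correct: $q>n/2$ gives $2q'<n^*$, the H\"older step $\|u\|_{2q'}\le {\rm vol}(X)^{\beta}\|u\|_{n^*}$ with $\beta>0$ is legitimate on the finite-volume space $X$, and $\varepsilon(r)\to 0$ uniformly in $x$ because $\|R_g\|_{L^q(B_r(x))}\le\|R_g\|_{L^q(X)}$ and ${\rm vol}(B_r(x))\lesssim r^{n}$. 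For the local statement you do not even need the sharp model-cone constant $A_0$: restricting the global inequality to functions supported in a small ball and absorbing the $B\|u\|_{2}^{2}$ term via $\|u\|_{2}\le{\rm vol}(B_r)^{1/n}\|u\|_{n^*}$ already produces a homogeneous Sobolev inequality with a constant uniform in $x$ and $r$ small, which is all that positivity of the infimum requires.

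The genuine gap is the branch (\ref{int:sc:2}). You assert that one ``replaces the global H\"older estimate by its scale-invariant Morrey counterpart, the outcome being the same,'' but for $1<q\le n/2$ the exponent $\beta=\tfrac{1}{2q'}-\tfrac{1}{n^*}$ is nonpositive, so the quantity $\varepsilon(r)=c_n\|R_g\|_{L^q(B_r(x))}{\rm vol}(B_r(x))^{2\beta}$ neither dominates $\int_{B_r}|R_g|u^2$ on the constraint sphere nor tends to zero: the entire scheme of bounding the curvature term by a constant independent of $u$ collapses. What (\ref{int:sc:2}) actually delivers is a Fefferman--Phong type estimate: since $\int_{B_r}|R_g|^q\le Cr^{n-q}$ is subcritical relative to the critical scaling $r^{n-2q}$, one gets $\int_X|R_g|u^2\le\delta\|du\|_2^2+C_\delta\|u\|_2^2$ with $\delta$ arbitrarily small (and, on balls of radius $r$, with constant $O(r^{q})$ times the gradient term alone). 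This must then be fed into the inhomogeneous Sobolev inequality, and the resulting lower bound for $\mathcal Q$ on $\mathscr B_{n^*}$ has a different structure --- in particular the gradient term can no longer simply be discarded as in your display. So the (\ref{int:sc:2}) case is not a cosmetic variant of the (\ref{int:sc}) case and needs its own argument; note that it is precisely the case this paper relies on, since for a link with $R_{g_Z}=f(f-1)$ one has $R_g=O(x^{-1})$ by (\ref{rg:exp}), and then $R_g\in L^q$ only for $q<f+1$, which is incompatible with $q>n/2$ whenever $f+1\le n/2$.
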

		
		The main result in \cite{akutagawa2014yamabe}, as applied to wedge spaces, yields the following criterion for the existence of minimizers for the Yamabe functional in (\ref{yam:func}).

		\begin{theorem}\label{exis:min}\cite{akutagawa2014yamabe}
			Assume that either (\ref{int:sc}) or  (\ref{int:sc:2}) holds and 
			that 
			\begin{equation}\label{aubin:cr}
				\mathcal Y_{\rm glo}(X,[g]_{\rm w})<\mathcal Y_{\rm loc}(X,[g]_{\rm w}).
			\end{equation}
			Then there exists a minimizer for $\mathcal Q|_{\mathscr B_{n^*}}$. 
		\end{theorem}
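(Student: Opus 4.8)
The plan is to follow the classical Aubin--Schoen strategy for the Yamabe problem, adapting the concentration-compactness analysis to the wedge setting. First I would fix a minimizing sequence $(u_k)\subset\mathscr B_{n^*}$, so that $\mathcal Q(u_k)\to\mathcal Y_{\rm glo}(X,[g]_{\rm w})$ and $\|u_k\|_{n^*}=1$ for all $k$. Under either (\ref{int:sc}) or (\ref{int:sc:2}), Theorem \ref{equal:y:s} guarantees that $\mathcal Q$ is bounded from below on $\mathscr B_{n^*}$; combining the resulting control on $\int_X|du_k|^2\,d{\rm vol}_g$ with the normalization $\|u_k\|_{n^*}=1$ and the integrability of $R_g$ (via H\"older) yields a uniform bound on $\|u_k\|_{H^1}$. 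After passing to a subsequence I may therefore assume $u_k\rightharpoonup u$ weakly in $H^1(X,d{\rm vol}_g)$, $u_k\to u$ strongly in $L^p(X,d{\rm vol}_g)$ for every $p<n^*$ (by the compact embedding \cite[Proposition 1.6]{akutagawa2014yamabe}), and $u_k\to u$ almost everywhere. In particular, since $2<n^*$ and $R_g\in L^q$ with $q>n/2$, the potential term passes to the limit: $\int_X R_g u_k^2\,d{\rm vol}_g\to\int_X R_g u^2\,d{\rm vol}_g$.

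The crux is to show that no $L^{n^*}$-mass escapes in the limit, i.e. $\|u\|_{n^*}=1$. To this end I would invoke a concentration-compactness dichotomy \`a la Lions: the weak-$*$ limits decompose as
\[
|du_k|^2\,d{\rm vol}_g\;\rightharpoonup\;d\mu,\quad d\mu\geq|du|^2\,d{\rm vol}_g+\sum_j\mu_j\,\delta_{p_j},\quad |u_k|^{n^*}\,d{\rm vol}_g\;\rightharpoonup\;|u|^{n^*}\,d{\rm vol}_g+\sum_j\nu_j\,\delta_{p_j},
\]
for an at most countable family of atoms $\{p_j\}\subset X$ with weights $\mu_j,\nu_j\geq0$. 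At each atom a localized Sobolev-type inequality forces $\mu_j\geq\mathcal Y_{\rm loc}(X,[g]_{\rm w})\,\nu_j^{2/n^*}$: the relevant local constant is $\mathcal Y_n$ when $p_j\in X_s$, by (\ref{smooth:yam}), and the model value $\mathcal Y(\mathbb H^{b+1}\times Z,[g_{\rm hyp}+g_Z])$ when $p_j\in Y$, both bounded below by $\mathcal Y_{\rm loc}(X,[g]_{\rm w})$ by definition.

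Combining the energy decomposition with the limit of the potential term and lower semicontinuity of the Dirichlet energy, I obtain
\[
\mathcal Y_{\rm glo}(X,[g]_{\rm w})=\lim_k\mathcal Q(u_k)\geq\mathcal Q(u)+\mathcal Y_{\rm loc}(X,[g]_{\rm w})\sum_j\nu_j^{2/n^*}.
\]
Writing $t=\|u\|_{n^*}^{n^*}\in[0,1]$, so that $\mathcal Q(u)\geq\mathcal Y_{\rm glo}(X,[g]_{\rm w})\,t^{2/n^*}$ by the very definition of the global invariant, and using $1=t+\sum_j\nu_j$ together with the superadditivity $\sum_j\nu_j^{2/n^*}\geq(1-t)^{2/n^*}$, the elementary bound $t^{2/n^*}+(1-t)^{2/n^*}\geq1$ combined with the strict gap $\mathcal Y_{\rm loc}>\mathcal Y_{\rm glo}$ in (\ref{aubin:cr}) shows that $t<1$ is impossible. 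Hence all atoms vanish, $\|u\|_{n^*}=1$, and lower semicontinuity gives $\mathcal Q(u)\leq\mathcal Y_{\rm glo}(X,[g]_{\rm w})$; thus $u\in\mathscr B_{n^*}$ realizes the infimum.

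The step I expect to be the main obstacle is the concentration-compactness analysis at an atom lying on the singular stratum $Y$. For interior atoms the local model is Euclidean and the classical Aubin bubbling estimate applies verbatim, but an atom at $p_j\in Y$ requires a blow-up analysis adapted to the wedge geometry, under which the rescaled metrics converge to the product model $\mathbb R^b\times C_Z$ (equivalently $\mathbb H^{b+1}\times Z$). Establishing the correct localized Sobolev inequality with constant $\mathcal Y_{\rm loc}(X,[g]_{\rm w})$ at such points, and checking that the rescaling neither creates nor destroys $L^{n^*}$-mass, is the delicate point, and is precisely where the global identification of $\mathcal Y_{\rm loc}$ recorded just before the statement becomes essential.
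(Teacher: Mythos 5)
The paper does not prove this statement at all: Theorem \ref{exis:min} is imported verbatim from \cite{akutagawa2014yamabe}, so there is no in-paper argument to compare yours against. Judged on its own terms, your sketch is a correct and standard route to the result, and it captures the essential mechanism (the strict gap (\ref{aubin:cr}) plus the positivity of $\mathcal Y_{\rm loc}$ from Theorem \ref{equal:y:s} rules out concentration of $L^{n^*}$-mass, whether at a smooth point, where the local constant is $\mathcal Y_n$, or at a point of $Y$, where it is the model invariant $\mathcal Y(\mathbb H^{b+1}\times Z,[g_{\rm hyp}+g_Z])$). The convexity step at the end is sound in both sign regimes of $\mathcal Y_{\rm glo}$: if $\mathcal Y_{\rm glo}<0$ any atom costs $\mathcal Y_{\rm loc}(1-t)^{2/n^*}>0$, and if $0\leq\mathcal Y_{\rm glo}<\mathcal Y_{\rm loc}$ the superadditivity $t^{2/n^*}+(1-t)^{2/n^*}\geq 1$ together with the gap does the job. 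Two remarks. First, your route differs in packaging from the cited source: rather than a full Lions dichotomy for the defect measures, \cite{akutagawa2014yamabe} runs a subcritical regularization (minimize over $\|u\|_p=1$, $p<n^*$, where the embedding is compact) and prevents the subcritical minimizers from collapsing by means of a refined Sobolev inequality of the form $\|u\|_{n^*}^2\leq(\mathcal Y_{\rm loc}^{-1}+\epsilon)\,\mathcal Q(u)+A_\epsilon\|u\|_2^2$; that inequality is exactly the quantitative substitute for your localized bound $\mu_j\geq\mathcal Y_{\rm loc}\,\nu_j^{2/n^*}$ at the atoms, and either formulation works. Second, the step you flag as the main obstacle really is the entire content of the theorem: proving the local Sobolev inequality with the sharp constant near $Y$ (equivalently, the reverse H\"older estimate forcing atomicity of the defect measure on a stratified space, uniformly up to the singular stratum) is precisely what the blow-up analysis in \cite{akutagawa2014yamabe} accomplishes, so your proposal should be read as a correct reduction of the theorem to that lemma rather than a self-contained proof.
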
  
		
		\begin{remark}\label{acm:gen}
			We stress that the theory in \cite{akutagawa2014yamabe} applies to a much larger class of singular spaces than that considered here. Also, in the smooth case, it follows from (\ref{smooth:yam}) that Theorem \ref{exis:min} reproduces the classical Aubin's criterion for the existence of minimizers \cite[Theorem 4.5]{leeparker1987}.
		\end{remark}

		In order to use Theorem \ref{exis:min} to solve the Yamabe problem for a given wedge conformal class, we must first come to grips with the integrability conditions (\ref{int:sc})-(\ref{int:sc:2})
		on the scalar curvature of the background metric $g$. 
		As already observed in \cite[Lemma 2.4]{akutagawa2014yamabe}, it follows from the asymptotic expansion
		\begin{equation}\label{rg:exp}
			R_g\sim\left(R_{g_Z}-f(f-1)\right)x^{-2}+O(x^{-1}),\quad x\to 0,
		\end{equation}
		that either (\ref{int:sc}) or (\ref{int:sc:2})  hold true if 
		the link $(Z,g_Z)$ satisfies 
		\begin{equation}\label{rig:scalar}
			{R_{g_Z}=f(f-1)}.
		\end{equation}
		As a consequence, we obtain the following criterion for the existence of Yamabe wedge metrics.
		
		\begin{theorem}\cite{akutagawa2014yamabe}\label{summ}
			Let $(X,g)$ be a wedge space whose link $(Z,g_Z)$ satisfies one of the following conditions: 
			\begin{enumerate}
				\item $f=1$;
				\item $f=2$ and $(Z,g_Z)$ is the round unit sphere;
				\item $R_{g_Z}=f(f-1)$ if $3\leq f\leq n-1$. 
			\end{enumerate} 
			Then there exists a minimizer for $\mathcal Q|_{\mathscr B_{n^*}}$ as long as (\ref{aubin:cr}) is satisfied. 
		\end{theorem}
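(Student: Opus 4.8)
The plan is to reduce all three cases to the single integrability hypothesis (\ref{rig:scalar}), after which Theorem \ref{exis:min} applies verbatim once (\ref{aubin:cr}) is assumed. The crucial observation is that in each of the three cases the link satisfies $R_{g_Z}=f(f-1)$, so that the singular $x^{-2}$ term in the expansion (\ref{rg:exp}) drops out and $R_g=O(x^{-1})$ as $x\to 0$. Thus the apparently disparate hypotheses on the link are really three manifestations of the single normalization $R_{g_Z}=f(f-1)$, and this is the conceptual heart of the statement.

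First I would dispose of the low-dimensional cases. When $f=1$ the link is a closed curve, whose scalar curvature vanishes identically, so $R_{g_Z}=0=f(f-1)$; when $f=2$ and $(Z,g_Z)$ is the round unit sphere one has $R_{g_Z}=2K_{g_Z}=2=f(f-1)$. Hence (\ref{rig:scalar}) holds automatically in cases (1) and (2), while in case (3) it is precisely the standing assumption. In all three situations the remark preceding the statement then guarantees that either (\ref{int:sc}) or (\ref{int:sc:2}) is valid, but I would supply the estimate directly for completeness.

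Next I would verify the integrability conditions by hand. With the $x^{-2}$ term gone, (\ref{rg:exp}) gives $|R_g|\lesssim x^{-1}$ near $Y$. Since the wedge metric (\ref{ie:met}) has volume element comparable to $x^{f}\,dx\,d{\rm vol}_{g_Z}\,d{\rm vol}_{g_Y}$, a direct estimate over a geodesic ball $B_r(p)$ centered at a point $p\in Y$ yields
\[
\int_{B_r(p)}|R_g|^q\,d{\rm vol}_g \;\lesssim\; r^{b}\int_0^{r}x^{f-q}\,dx \;\lesssim\; r^{\,b+f-q+1}=r^{\,n-q},
\]
valid for any $q<f+1$; such a $q>1$ exists because $f\geq 1$. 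Consequently $r^{q-n}\int_{B_r(p)}|R_g|^q\,d{\rm vol}_g$ stays bounded and the Morrey-type condition (\ref{int:sc:2}) is satisfied. When the singular stratum is small, namely $b\leq f$, one checks instead that $n/2<f+1$, so one may pick $q\in(n/2,f+1)$ and verify the global integrability (\ref{int:sc}) directly. Balls centered away from $Y$ are harmless, since there $g$ is a genuine smooth metric on a compact piece and $R_g$ is bounded.

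Finally, with (\ref{int:sc:2}) in force (or (\ref{int:sc}) when $b\leq f$) and (\ref{aubin:cr}) assumed, Theorem \ref{exis:min} produces the desired minimizer of $\mathcal Q|_{\mathscr B_{n^*}}$, completing the argument. I expect the only delicate point to be the integrability bookkeeping of the preceding paragraph---in particular the need to fall back on the scale-invariant condition (\ref{int:sc:2}) rather than the global condition (\ref{int:sc}) precisely when the singular stratum is large ($b>f$)---but this becomes routine once the unifying normalization $R_{g_Z}=f(f-1)$ has been established across the three cases.
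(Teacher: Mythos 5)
Your proposal is correct and follows essentially the same route as the paper: all three link hypotheses are unified by the normalization $R_{g_Z}=f(f-1)$, which kills the $x^{-2}$ term in (\ref{rg:exp}) so that one of the integrability conditions (\ref{int:sc}) or (\ref{int:sc:2}) holds and Theorem \ref{exis:min} applies once (\ref{aubin:cr}) is assumed. The only difference is that you verify the integrability estimate by hand, whereas the paper simply cites \cite[Lemma 2.4]{akutagawa2014yamabe} for this step; your bookkeeping, including the dichotomy $b\leq f$ versus $b>f$ governing which of the two conditions is available, is consistent with that reference.
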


		\begin{remark}\label{quasi:smooth}
			Under the conditions of Theorem \ref{summ} (2),  the wedge metric $g$ actually extends smoothly to $Y$ and the result reduces to the classical criterion due to Aubin \cite[Theorem 4.5]{leeparker1987}; see Remark \ref{acm:gen}.
		\end{remark}
		
		Hence, under the assumptions on the link displayed in the previous theorem, ensuring that the scalar curvature meets the appropriate integrability condition, the general solution of the Yamabe problem gets reduced to checking the validity of the Aubin-type condition (\ref{aubin:cr}), a task that, as already observed, lies beyond the current technology due to the fact that the local Yamabe invariant is mostly inaccessible. Nonetheless, since here we are interested in merely constructing a wedge metric with {\em constant negative} scalar curvature,  we may take advantage of the freedom to suitably modify the background metric to which Theorem \ref{summ} should be applied. 
		
		\begin{theorem}\label{const:neg}
			Let $(X,g)$ be a wedge space whose link $(Z,g_Z)$ satisfies one of the following conditions:
			\begin{enumerate}
				\item $f=1$;
				\item $f=2$ and $Z$ is (topologically) a sphere;
				\item $(Z,g_Z)$ is Yamabe positive if $3\leq f\leq n-1$. 
			\end{enumerate}
			Then $X_s$ carries a wedge metric with {constant negative} scalar curvature.
		\end{theorem}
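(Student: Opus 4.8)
The plan is to reduce each of the three cases to the setting of Theorem~\ref{summ}, and then to exploit the freedom in choosing the background wedge metric so as to force the global Yamabe invariant $\mathcal Y_{\rm glo}$ to be \emph{negative}; this makes the otherwise inaccessible Aubin-type gap (\ref{aubin:cr}) hold automatically, and a minimizer with negative energy yields a metric of constant negative scalar curvature.

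First I would normalize the link. In case (1) there is nothing to do: $Z=\mathbb S^1$ has $R_{g_Z}=0=f(f-1)$, so (\ref{rig:scalar}) holds and Theorem~\ref{summ}(1) applies. In case (2) the uniformization theorem places a round metric in $[g_Z]$; by Remark~\ref{yam:pos} we may replace $g_Z$ by this round metric without leaving the quasi-isometry class of wedge metrics, and then $R_{g_Z}=2=f(f-1)$, matching Theorem~\ref{summ}(2). In case (3) Yamabe positivity together with Remark~\ref{yam:pos} lets us replace $g_Z$ by $g_Z'\in[g_Z]$ with $R_{g_Z'}=f(f-1)$, matching Theorem~\ref{summ}(3). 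In all cases the normalized link satisfies (\ref{rig:scalar}), so by (\ref{rg:exp}) the scalar curvature of the wedge metric is $O(x^{-1})$ as $x\to 0$; since $d{\rm vol}_g\sim x^{f}\,dx\,d{\rm vol}_{g_Z}\,d{\rm vol}_{g_Y}$ with $f\geq 1$, the integrability conditions (\ref{int:sc})--(\ref{int:sc:2}) hold, and Theorem~\ref{equal:y:s} gives $\mathcal Y_{\rm loc}(X,[g]_{\rm w})>0$ together with $\mathcal Y_{\rm glo}(X,[g]_{\rm w})>-\infty$.

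Next I would modify the background metric in the interior to make its total scalar curvature negative. Since $\mathcal Y_{\rm glo}$ is a conformal invariant, a conformal change cannot help; instead I perform a compactly supported deformation of $g$ inside $X_s\backslash U$, leaving the wedge region $U$ (hence the link normalization and the integrability just established) untouched, so as to arrange $\int_{X_s}R_g\,d{\rm vol}_g<0$. This is possible by standard local deformations producing arbitrarily negative scalar curvature on the smooth compact piece $X_s\backslash U$ while keeping the metric fixed near $Y^\bullet$ (cf.\ \cite{kazdan1975existence}), using that the contribution $\int_U R_g\,d{\rm vol}_g$ is a fixed finite number by the integrability above. Testing the Yamabe quotient against the constant $u\equiv 1\in H^{1}(X,d{\rm vol}_g)$ (admissible since the wedge metric has finite volume) gives $\mathcal Q(1)=c_n\int_{X_s}R_g\,d{\rm vol}_g<0$, and by quadratic homogeneity $\mathcal Y_{\rm glo}\leq \mathcal Q(1)/\|1\|_{n^*}^{2}<0$. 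Combined with $\mathcal Y_{\rm loc}>0$ this yields the strict inequality (\ref{aubin:cr}), so Theorem~\ref{summ} produces a minimizer $u$ for $\mathcal Q|_{\mathscr B_{n^*}}$, which we may take positive.

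Finally I would read off the sign of the curvature. By Proposition~\ref{var:yam} the minimizer solves (\ref{crit:pde}), $\mathcal L_g u=\mu u^{\frac{n+2}{n-2}}$; pairing with $u$ and using $\|u\|_{n^*}=1$ gives $\mu=\mathcal Q(u)=\mathcal Y_{\rm glo}<0$, and the conformal transformation law yields $R_{\widetilde g}=\mu/c_n$, a negative constant, for $\widetilde g=u^{\frac{4}{n-2}}g$. The one remaining point, and the step I expect to be the genuine obstacle, is to verify that $\widetilde g$ is an honest \emph{wedge} metric, i.e.\ that $u$ is smooth on $X_s$ and satisfies the two-sided bound $0<c^{-1}\leq u\leq c$ on $U$ required by Proposition~\ref{var:yam}. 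Smoothness in the interior is elliptic regularity, but the uniform positivity and boundedness of the minimizer near the singular stratum is the delicate analytic input; it is supplied by the singular Yamabe theory of \cite{akutagawa2014yamabe}, whose asymptotic and maximum-principle estimates control the behavior of solutions of (\ref{crit:pde}) as $x\to 0$. Granting this, $\widetilde g$ is the desired wedge metric of constant negative scalar curvature.
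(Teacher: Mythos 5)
Your proposal is correct and follows essentially the same route as the paper: normalize the link so that Theorem~\ref{summ} applies, inject negative scalar curvature in $X_s\backslash U$ to force $\int_X R_g\,d{\rm vol}_g<0$ and hence $\mathcal Y_{\rm glo}<0<\mathcal Y_{\rm loc}$, and then read off the negative constant from the minimizer's Euler--Lagrange equation. The only difference is that you spell out the final regularity and two-sided bound on the conformal factor near $Y$, which the paper defers to Remark~\ref{rem:reg:1} (citing the regularity theory of \cite{akutagawa2014yamabe}), so your identification of that step as the delicate input is accurate.
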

		
		\begin{proof}
			We first observe that, as explained in Remark \ref{yam:pos}, if item (3) above is satisfied then we can replace $g_Z$ by a Yamabe metric whose constant scalar curvature is $f(f-1)$, so as to ensure that item (3) in Theorem \ref{summ} is met. Also, if $f=2$ we may also reduce to Theorem \ref{summ} (2) by replacing the link by the unit sphere (but recall that here the new wedge metric turns out to be smooth). 
			After these preliminaries, 
			we proceed by using (\ref{yam:func}) with $u$ a suitable constant to get 
			\[
			\mathcal Y_{\rm glo}(X,[g]_{\rm w})\leq C\int_XR_gd{\rm vol}_g.
			\]
			As in \cite[Subsection 4.32]{besse2007einstein} we can inject
			a sufficiently large amount of negative scalar curvature around some point in $X_s\backslash U$ (without further altering the wedge region) so as to make the new background metric, still denoted by $g$, to satisfy $\int_XR_gd{\rm vol}_g< 0$. Thus, $\mathcal Y_{\rm glo}(X,[g]_{\rm w})< 0$ and  Theorem \ref{summ} applies since $	\mathcal Y_{\rm loc}(X,[g]_{\rm w})>0$ and hence (\ref{aubin:cr}) is satisfied. Leading the minimizer to (\ref{crit:pde}) we immediately see that the (constant) scalar curvature of the corresponding Yamabe wedge metric is negative, as desired. 
		\end{proof}

		\begin{remark}\label{rem:reg:1}
			The conformal factor, say $u$,  obtained in Theorem \ref{const:neg} is strictly positive and remains uniformly bounded from above as $x\to 0$. Thus, the Yamabe metric so obtained is quasi-isometric to the original background wedge metric. In fact, the regularity theory in \cite[Section 3]{akutagawa2014yamabe} guarantees that $u(x,y,z)=u_0(y)+o(1)$ as $x\to 0$, where $u_0:Y\to\mathbb R$ is smooth and strictly positive. Hence, after possibly implementing a conformal deformation on $g_Y$ and a ($y$-dependent) change in the radial coordinate, the new metric still satisfies (\ref{ie:met}), so that the corresponding link metric remains independent of $y\in Y$. 
		\end{remark}
		
		\begin{remark}\label{upshot}
			It follows from the analysis in \cite[Section 2.3]{akutagawa2014yamabe} that Theorem \ref{summ} (3) holds true under the weaker assumption that the first eigenvalue $\lambda_0(\mathcal L^n_{g_Z})$ of the operator
			\[
			\mathcal L^n_{g_Z}=-\Delta_{g_Z}+c_nR_{g_Z}, \quad c_n=\frac{n-2}{4(n-1)},
			\]
			is exactly equal to $c_nf(f-1)$; indeed, it suffices to assume $\lambda_0(\mathcal L^n_{g_Z})>0$ if $f=n-1$. Since in this case there exist positive constants $A$ e $B$ depending only on $f$ and $n$ such that
			\[
			\mathcal L^f_{g_Z}=A\mathcal L^n_{g_Z}-B\Delta_{g_Z},
			\] 
			any of these assumptions on $\mathcal L^n_{g_Z}$ actually imply that $(Z,g_Z)$ is Yamabe positive, which is the requirement in Theorem \ref{const:neg} (3). Hence, as far as the conclusion of Theorem \ref{presc:ex} is concerned, nothing is gained if we use this finer result based on $\lambda_0(\mathcal L^n_{g_Z})$.
		\end{remark}

		\section{The wedge elliptic theory for the scalar Laplacian}\label{wedge:ell:th}
		
		We describe how the wedge elliptic theory applies to Laplace-type operators on a wedge space $(X,g)$ as above. 
		As hinted at in the Introduction, this involves considering $\Delta_g$ as acting on a suitable weighted Sobolev scale and then employing the powerful micro-local methods in \cite{mazzeo1991elliptic,lesch1997differential,schulze1998boundary,egorov2012pseudo}, with further developments in \cite{gil2003adjoints,gil2013closure,albin2012signature,albin2016index,albin2018hodge,albin2022cheeger}, to check that $\Delta_g$, viewed as an unbounded, symmetric operator, has good mapping properties (essential self-adjointness and Fredholmness) for a carefully chosen value of the weight.
		We emphasize that surjectivity would suffice for the application we have in mind (Theorem \ref{presc:ex}), but we have chosen to establish finer mapping properties not only because, as explained in Remark \ref{semi:lin}, they provide a precise (local) description of the space of solutions of certain semi-linear elliptic equations on $(X,g)$, but also because we intend to transplant the analysis to the Dirac operator treated in Section \ref{top:obst}, where self-adjointness and Fredholmness are crucial in applications. Also, as the statement of our ultimate goal, Theorem \ref{self:ad:lap}, makes it clear, in this section we pose {\em no} restriction on the geometry of the link $(Z,g_Z)$ if $f\geq 3$.

		We next consider the ${\rm b}$-density  
		\[
		d{\rm vol}_{\rm b}=x^{-1}dxd{\rm vol}_{g_Y}d{\rm vol}_{g_Z}
		\] 
		in the wedge region $U$ and extend it to $X_s$ in the obvious manner.  
		
		\begin{definition}\label{sob:scale:0}
			Given an integer $k\geq 0$ and a cutoff function $\varphi$ with $\varphi\equiv 1$ near $Y$ and $\varphi\equiv 0$ outside $U$, we define $\mathcal H^{k}_{\rm b}(X)$ to be the space of all distributions $u\in \mathcal D'(X_s)$   such that:
			\begin{itemize}
				\item $(1-\varphi)u$ lies in the standard Sobolev space $H^{k}(X_s,d{\rm vol}_g)$;
				\item there holds
				\[
				(x\partial_x)^j(x\partial_y)^\mu\partial_z^\nu (\varphi u)(x,y,z)\in L^2(X,d{\rm vol}_{\rm b}), \quad j+|\mu|+|\nu|\leq k.
				\]
			\end{itemize} 
		\end{definition}
		
		\begin{definition}\label{sob:scale} (Weighted Sobolev scale)
			If  $\beta\in\mathbb R$ we set
			\[
			x^\beta \mathcal H^{k}_{\rm b}(X)=\left\{v;x^{-\beta}v\in \mathcal H^{k}_{\rm b}(X)\right\}.
			\] 
		\end{definition}

		\begin{remark}\label{scale:sob}
			Using interpolation and duality we may define $x^\beta\mathcal H^{\sigma}_{\rm b}(X)$ for any $\sigma\in\mathbb R$.
			This turns out to be a Sobolev scale of Hilbert spaces. For instance, 
			\begin{equation}\label{inner:def}
				\langle u,v\rangle_{x^\beta\mathcal H^{0}_{\rm b}(X)}=\int_X x^{-2\beta}uv\, d{\rm vol}_{\rm b}. 
			\end{equation}
			In particular, 
			one has the continuous inclusion
			\[
			x^{\beta'}\mathcal H^{\sigma'}_{\rm b}(X)\subset x^\beta\mathcal H^{\sigma}_{\rm b}(X), \quad \beta'\geq\beta,\quad \sigma'\geq\sigma,
			\]
			which is compact if strict inequalities hold. Also, if $\sigma>n/2$ then
			any $u\in x^{\beta}\mathcal H^{\sigma}_{\rm b}(X)$
			is continuous in $X_s$ and satisfies $u(x)=O(x^\beta)$ as $x\to 0$.
		\end{remark}
		
		We are interested here not only in  the {bounded}, weighted Laplacian
		\begin{equation}\label{bound:t}
			\mathcal M_{g,\beta}:=-x^2\Delta_g:x^\beta\mathcal H^{\sigma}_{\rm b}(X)\to x^\beta\mathcal H^{\sigma-2}_{\rm b}(X),
		\end{equation}
		induced by the natural action of $\mathcal M_g:=-x^2\Delta_g$ on the weighted Sobolev scale, 
		but also in the unbounded Laplacian
		\begin{equation}\label{mg:unbound}
			\mathcal C_{g,\beta}:=	-\Delta_g:C^\infty_{\rm cpt}(X)\subset x^\beta\mathcal H^{0}_{\rm b}(X)\to x^\beta\mathcal H^{0}_{\rm b}(X),
		\end{equation}
		viewed as a densely defined operator. Their analysis will eventually hinge on the pair of $f$-dependent constants
		\[
		\beta_f=-\frac{f+1}{2}, \quad 	\gamma_f=\frac{1-f}{2}.
		\]
		
		\begin{proposition}\label{self-ad-delta}
			The  Laplacian $\mathcal C_{g,\beta}$ is {symmetric}  if $\beta=\beta_f$.
		\end{proposition}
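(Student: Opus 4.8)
The plan is to show that, for the distinguished weight $\beta=\beta_f$, the inner product of the weighted space $x^{\beta_f}\mathcal H^{0}_{\rm b}(X)$ recorded in \eqref{inner:def} coincides with the geometric $L^2$-pairing $\int_X uv\,d{\rm vol}_g$, after which symmetry of $-\Delta_g$ reduces to the classical Green identity. The one computation that makes this work is the comparison between the ${\rm b}$-density and the Riemannian volume element near $Y$. From the wedge normal form \eqref{ie:met}, the block $x^2 g_Z$ in the $f$ link directions contributes a factor $x^f$ to $\sqrt{\det g}$, so that
\[
d{\rm vol}_g = x^{f+1}\,d{\rm vol}_{\rm b},\qquad d{\rm vol}_{\rm b}=x^{-1}dx\,d{\rm vol}_{g_Y}\,d{\rm vol}_{g_Z},
\]
equivalently $d{\rm vol}_{\rm b}=x^{-(f+1)}d{\rm vol}_g$.

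First I would insert this identity into \eqref{inner:def}: for $u,v\in C^\infty_{\rm cpt}(X)$,
\[
\langle u,v\rangle_{x^{\beta}\mathcal H^{0}_{\rm b}(X)}=\int_X x^{-2\beta}uv\,d{\rm vol}_{\rm b}=\int_X x^{-2\beta-(f+1)}uv\,d{\rm vol}_g,
\]
and the exponent $-2\beta-(f+1)$ vanishes \emph{precisely} when $\beta=\beta_f=-\tfrac{f+1}{2}$. Thus at this weight the ${\rm b}$-$L^2$ pairing is nothing but the unweighted $L^2(X,d{\rm vol}_g)$ pairing, and the same substitution gives $\langle \mathcal C_{g,\beta_f}u,v\rangle_{x^{\beta_f}\mathcal H^{0}_{\rm b}(X)}=\int_X(-\Delta_g u)\,v\,d{\rm vol}_g$. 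It then remains only to integrate by parts. Since $u,v$ are supported in a compact subset of the open smooth stratum $X_s$, Green's formula applies with no boundary contribution:
\[
\int_X(-\Delta_g u)\,v\,d{\rm vol}_g=\int_X\langle\nabla u,\nabla v\rangle_g\,d{\rm vol}_g=\int_X u\,(-\Delta_g v)\,d{\rm vol}_g,
\]
which is manifestly symmetric in $u$ and $v$; re-expressing the last integral through the same volume identity yields $\langle u,\mathcal C_{g,\beta_f}v\rangle_{x^{\beta_f}\mathcal H^{0}_{\rm b}(X)}$, as required.

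The crux — and the only place where the precise value $\beta_f$ is forced — is the volume comparison $d{\rm vol}_g=x^{f+1}d{\rm vol}_{\rm b}$. I would check carefully that the leading volume coefficient is exactly $x^f$ times the product density $dx\,d{\rm vol}_{g_Y}\,d{\rm vol}_{g_Z}$, so that $x^{-2\beta_f}d{\rm vol}_{\rm b}=d{\rm vol}_g$ with \emph{constant} ratio equal to $1$. This constancy is essential: for a non-constant ratio $w$ the integration by parts in $\int_X(-\Delta_g u)\,vw\,d{\rm vol}_g$ leaves the extra term $\int_X\langle\nabla w,\,v\nabla u-u\nabla v\rangle_g\,d{\rm vol}_g$, which does not vanish, so symmetry would break for any other weight. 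The subleading $o(x)$ terms in \eqref{ie:met} perturb $w$ by an $o(1)$ amount and are exactly what the finer self-adjointness and Fredholmness analysis of the subsequent results must absorb; the identity recorded here is the clean statement $-2\beta_f=f+1$, which already exhibits the dimensional role played by the link through $f$ and which will reappear in the sharper mapping properties.
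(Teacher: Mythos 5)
Your proposal is correct and follows essentially the same route as the paper: both hinge on the volume comparison $d{\rm vol}_{\rm b}=x^{2\beta_f}d{\rm vol}_g$ (equivalently $d{\rm vol}_g=x^{f+1}d{\rm vol}_{\rm b}$), which turns the weighted pairing at $\beta=\beta_f$ into the geometric $L^2(X,d{\rm vol}_g)$ pairing, after which symmetry is Green's identity for compactly supported functions on the smooth stratum. Your additional observation that the exponent $-2\beta-(f+1)$ vanishes \emph{only} at $\beta=\beta_f$, and that a non-constant ratio would obstruct symmetry, is a correct sharpening of what the paper states more tersely.
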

		
		\begin{proof}
			Since in the wedge region $d{\rm vol}_{\rm b}$ relates to the Riemannian volume element $d{\rm vol}_g$ of the underlying wedge metric $g$ by 
			\begin{equation}\label{vol:rel}
				d{\rm vol}_{\rm b}=x^{2\beta_f}d{\rm vol}_g,
			\end{equation}
			it follows from (\ref{inner:def}) that 
			\[
			\langle \Delta_{g}u,v\rangle_{x^\beta\mathcal H^{0}_{\rm b}(X)}=\int_X 
			x^{-2(\beta-\beta_f)}v\Delta_{g}u\, d{\rm vol}_{g}, 
			\]    
			which gives the result.
		\end{proof}
		
		\begin{remark}\label{adjust}
			The assertion in Proposition \ref{self-ad-delta}	also holds true if we replace $\Delta_g$ by any operator which is formally self-adjoint with respect to $L^2(X,d{\rm vol}_{g})$ (the Dirac operator, for instance). 
		\end{remark}

		At least in the conical case ($f=n-1$) the mapping properties of 
		$\mathcal M_{g,\beta}$
		are completely determined by the spectral resolution of $\Delta_{g_Z}$ 	in a way that we now explain. We first note that for a general wedge space, 
		\[
		\mathcal M_{g,\beta}|_{U}=-\mathsf D^2_x-\left(f-1\right)\mathsf D_x-\Delta_{g_Z} - x^2\Delta_{g_Y}+o(1),
		\]
		where $\mathsf D_x=x\partial_x$ and of course the term $x^2\Delta_{g_Y}$ should be omitted in the conical case ($Y$ collapses into a point). In any case, after discarding the last two terms with an explicit dependence on $x$
		we obtain the {\em indicial operator}
		\begin{equation}\label{ind:op:def}
			\mathcal I_{\mathcal M_{g,\beta}}=-\mathsf D^2_x-\left(f-1\right)\mathsf D_x-\Delta_{g_Z}.
		\end{equation}
		Also, if we replace $\mathsf D_x$ by $\zeta\in\mathbb C$
		we get the {\em indicial symbol} of $\mathcal M_{g,\beta}$,
		\begin{equation}\label{conorm:symb}
			I_{\mathcal M_{g,\beta}}(\zeta)=-\zeta^2-(f-1)\zeta-\Delta_{g_Z},
		\end{equation}
		a one-parameter family of elliptic operators on $(Z,g_Z)$. 
		
		Let us consider the {\em spectrum} of $\Delta_{g_Z}$,  
		\[
		{\rm Spec}(\Delta_{g_Z})=\left\{\mu\in\mathbb R; \exists u\neq 0  \,{\rm satisfying}\,-\Delta_{g_Z}u=\mu u\right\}\subset [0,+\infty). 
		\]
		For each $\mu\in {\rm Spec}(\Delta_{g_Z})$, let $\{\zeta^\pm_{\mu,f}\}$ be the (real) roots of the {\em indicial equation}
		\[
		\zeta^2+(f-1)\zeta-\mu=0,
		\]
		which is obtained by equating to zero the restriction of the right-hand side of (\ref{conorm:symb}) to each eigenspace of $\Delta_{g_Z}$.
		Explicitly, the {\em indicial roots} are
		\begin{equation}\label{indicial:r}
			\zeta^\pm_{\mu,f}=\gamma_f\pm\sqrt{\gamma_f^2+\mu}. 
		\end{equation}
	
	\begin{definition}\label{def:ind:sets}
		The {\em indicial set} is 
		\[
		\mathfrak I_{\mathcal M_{g}}^f=\bigcup_{\mu\in\, {\rm Spec}(\Delta_{g_Z})}\{\zeta^\pm_{\mu,f}\},
		\]
		which is a discrete subset of $\mathbb R$ (because $Z$ is closed), and its complement, the {\em non-indicial set}, is 
		\[
		\nonind_{\mathcal M_{g}}^f=\mathbb R\backslash \mathfrak I^f_{\mathcal M_{g}},
		\]
		a countable union of bounded, open intervals. Finally, the connected component of $\nonind_{\mathcal M_{g}}^f$ corresponding to the eigenvalue $\mu=0$ is called the {\em innermost non-indicial interval}.
		\end{definition}
		
		In the conical case, we have the following well-known result.
		
		\begin{theorem}\label{con:mazzeo}
			The map (\ref{bound:t}) is Fredholm if and only if $\beta\in \nonind^{n-1}_{\mathcal M_{g}}$, with the corresponding Fredholm index remaining the same as long as $\beta$ varies in a given connected component of $\nonind^{n-1}_{\mathcal M_{g}}$.
		\end{theorem}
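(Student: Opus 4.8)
The plan is to regard the map \eqref{bound:t} as a second-order \emph{cone} (equivalently b-, or totally characteristic) differential operator and to analyze it within the associated pseudodifferential calculus of \cite{mazzeo1991elliptic,lesch1997differential,schulze1998boundary,egorov2012pseudo}. Since on $X\setminus U$ the operator $\mathcal M_g=-x^2\Delta_g$ is, up to the positive factor $x^2$, a classical elliptic operator mapping ordinary Sobolev spaces continuously, all the subtlety is concentrated at the tip $x=0$; accordingly I would first localize by means of the cutoff $\varphi$ of Definition \ref{sob:scale:0} and reduce the problem to the model cone $C_Z=[0,1)\times Z$. There $\mathcal M_g$ differs from the indicial operator $\mathcal I_{\mathcal M_{g,\beta}}$ of \eqref{ind:op:def} only by terms that are subleading ($o(1)$) relative to it (the $x^2\Delta_{g_Y}$ term being absent in the conical case $b=0$), and these lower-order remainders will eventually be absorbed into compact errors in the weighted scale.

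The heart of the matter is the behavior at the tip, which I would extract by the Mellin transform $u\mapsto\widehat u(\zeta,\cdot)=\int_0^\infty x^{-\zeta}u(x,\cdot)\,\tfrac{dx}{x}$ in the radial variable. This conjugates $\mathsf D_x=x\partial_x$ into multiplication by $\zeta$ and therefore turns the indicial operator into the indicial family $I_{\mathcal M_{g,\beta}}(\zeta)$ of \eqref{conorm:symb}, a holomorphic family of elliptic operators on $(Z,g_Z)$. By the Mellin--Plancherel theorem the weighted space $x^\beta\mathcal H^0_{\rm b}(X)$ is isometric to $L^2$ of the vertical line $\{\mathrm{Re}\,\zeta=\beta\}$ with values in $L^2(Z,d{\rm vol}_{g_Z})$, so the normal (model) operator at the tip is invertible precisely when $I_{\mathcal M_{g,\beta}}(\zeta)$ is invertible for every $\zeta$ on that line. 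Restricting \eqref{conorm:symb} to each $\mu$-eigenspace of $\Delta_{g_Z}$ shows that invertibility fails exactly at the indicial roots $\zeta^\pm_{\mu,f}$ of \eqref{indicial:r}; these are real, so the line $\{\mathrm{Re}\,\zeta=\beta\}$ meets $\mathfrak I^{n-1}_{\mathcal M_g}$ if and only if $\beta$ is itself an indicial root. Thus the model operator is invertible on the weight line exactly when $\beta\in\nonind^{n-1}_{\mathcal M_g}$.

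With this in hand I would establish the stated equivalence in two steps. For $\beta\in\nonind^{n-1}_{\mathcal M_g}$, the b-ellipticity of $\mathcal M_g$ (its b-principal symbol, being that of $-x^2\Delta_g$, is invertible off the zero section) supplies an interior parametrix smoothing in the b-sense, while the inverse $I_{\mathcal M_{g,\beta}}(\zeta)^{-1}$ along $\{\mathrm{Re}\,\zeta=\beta\}$, reassembled by an inverse Mellin transform, supplies a parametrix for the normal operator. Patching the two and feeding the resulting remainders back into the calculus should produce a two-sided inverse of $\mathcal M_{g,\beta}$ modulo operators that are compact on the weighted scale (here the regularity input of \cite{gil2003adjoints,gil2013closure}, together with the compact inclusions recorded in Remark \ref{scale:sob}, is what guarantees compactness), whence Fredholmness; b-elliptic regularity moreover makes this conclusion, and the index, independent of $\sigma$. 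Conversely, if $\beta\in\mathfrak I^{n-1}_{\mathcal M_g}$, I would take functions modeled on $x^{\zeta^\pm_{\mu,f}}$ times an eigenfunction of $\Delta_{g_Z}$ and cut them off at successively finer scales near $x=0$, obtaining an approximate-kernel (Weyl) sequence with no convergent subsequence, so that the range of $\mathcal M_{g,\beta}$ is not closed and the operator is not Fredholm.

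Finally, the local constancy of the index should follow from homotopy invariance: as $\beta$ varies within a single connected component of $\nonind^{n-1}_{\mathcal M_g}$ the line $\{\mathrm{Re}\,\zeta=\beta\}$ never crosses an indicial root, so $I_{\mathcal M_{g,\beta}}(\zeta)^{-1}$---and with it the whole parametrix---depends continuously on $\beta$; thus $\{\mathcal M_{g,\beta}\}$ is a norm-continuous family of Fredholm operators and its index cannot jump. I expect the main obstacle to be precisely the parametrix patching, namely verifying that the interior and normal parametrices can be glued so that all remainder terms---including the $O(x^{-1})$ subleading corrections to the indicial operator---are genuinely compact in the weighted b-Sobolev scale rather than merely bounded. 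This bookkeeping, rather than any single estimate, is the technical core, and it is exactly the point where the closed-range and adjoint regularity results of \cite{gil2003adjoints,gil2013closure} are indispensable.
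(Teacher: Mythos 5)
Your proposal is correct and follows essentially the same route the paper relies on: the paper gives no proof of Theorem \ref{con:mazzeo} beyond citing \cite[Theorem 4.4]{mazzeo1991elliptic} (with outlines in \cite{de2022scalar,de2022mapping} and \cite{schrohe2001ellipticity} for the index), and your Mellin-transform/indicial-family/parametrix argument, the Weyl-sequence converse, and the homotopy argument for local constancy of the index are precisely the content of those references. The only small slip is attributing the compactness of the parametrix remainders to \cite{gil2003adjoints,gil2013closure} --- those enter the paper later, for closed extensions and self-adjointness, while the compactness here comes from the cone/b-calculus itself together with the compact inclusions of Remark \ref{scale:sob}.
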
 
		
		This follows from \cite[Theorem 4.4]{mazzeo1991elliptic}.
		An outline of its proof, along the lines of the theory developed in \cite{schulze1998boundary,lesch1997differential}, may be found in \cite{de2022scalar,de2022mapping}, albeit the weight numerics there is slightly different from ours. Also, see \cite{schrohe2001ellipticity} for the specific information regarding the dependence of the Fredholm index on $\beta$.

		Although its proof requires a somewhat delicate analysis, it is intuitively clear from this discussion that the following holds in the conical case: as $\beta$ varies in $\nonind^{n-1}_{\mathcal M_{g}}$, the Fredholm index of (\ref{bound:t}) assumes its minimal value in the innermost non-indicial interval corresponding to $\mu=0$, namely, $I_{n-1}:=\left(2-n,0\right)$.
		Also, we may compute this minimal Fredholm index by observing  that, as explained in \cite[Corollary 3.6]{de2022mapping}, (\ref{bound:t}) is essentially self-adjoint (and hence Fredholm of index $0$) if $\beta=\beta_{n-1}$; see Proposition \ref{self-ad-delta}. For the sake of comparison with the corresponding argument below in the general wedge case, we briefly reproduce this reasoning here. 
		
		We recall that the {\em minimal domain} 
		of $\mathcal C_{g,\beta}$ as defined by (\ref{mg:unbound}) is
		\[
		\mathcal D_{\rm min}(\mathcal C_{g,\beta})=\left\{
		u\in x^\beta\mathcal H_{\rm b}^0(X);\exists\{u_n\}\subset C_{\rm cpt}^\infty(X_s);u_n\stackrel{\mathcal H_{\rm b}^0}{\longrightarrow} u, \Delta_gu_n\,{\rm is}\,\mathcal H_{\rm b}^0-{\rm Cauchy}
		\right\},
		\]
		whereas its {\em maximal domain} is 
		\[
		\mathcal D_{\rm max}(\mathcal C_{g,\beta})=\left\{
		u\in x^\beta\mathcal H_{\rm b}^0(X);\Delta_g u\in  x^\beta\mathcal H_{\rm b}^0(X)
		\right\}.
		\]
		It is known that $	\mathcal D_{\rm min}(\mathcal C_{g,\beta})\subset 	\mathcal D_{\rm max}(\mathcal C_{g,\beta})$ and that 
		(closed) sub-spaces of 
		\[
		\mathcal Q(\mathcal C_{g,\beta}):=\mathcal D_{\rm max}(\mathcal C_{g,\beta})/\mathcal D_{\rm min}(\mathcal C_{g,\beta})
		\]
		correspond to closed extensions of $\mathcal C_{g,\beta}$.
		The key observation now is the following fact whose proof may be found in \cite[Section 1.3]{lesch1997differential}, but again with a different weight numerics:
		\begin{itemize}
			\item $\mathcal Q(\mathcal M_{g,\beta})$ is formed by contributions coming from the {\em finite} set
			\[
			\mathfrak I^{n-1}_{\beta}:=\mathfrak I^{n-1}_{\mathcal M_{g}}\cap (\beta,2+\beta).
			\] 
			In particular,  if $\mathfrak I^{n-1}_\beta=\emptyset$ then $\mathcal Q(\mathcal C_{g,\beta})=\{0\}$ and  $\mathcal C_{g,\beta}$ has a unique closed extension.
		\end{itemize}
		
		We now remark that $(\beta_{n-1}, 2+\beta_{n-1})\subset I_{n-1}$ and hence  $\mathfrak I^{n-1}_{\beta_{n-1}}=\emptyset$  if $n>4$. Thus, $\mathcal C_{g,\beta_{n-1}}$ has a unique closed extension which is self-adjoint (and hence Fredholm of index $0$) because it is symmetric by Proposition \ref{self-ad-delta}. The next result then follows from the discussion above and Theorem \ref{con:mazzeo}.
		
		\begin{theorem}\label{map:lap:cr:con}
			Let $(X,g)$ be a conical space satisfying $n>4$. 
			Then the unbounded symmetric map $\mathcal C_{g,\beta_{n-1}}$  in (\ref{mg:unbound}) is essentially self-adjoint. Moreover, 
			the bounded map (\ref{bound:t}) is Fredholm of index $0$ as long as $\beta\in I_{n-1}$. 
		\end{theorem}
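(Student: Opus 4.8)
\textbf{}The plan is to assemble the ingredients prepared in the discussion immediately preceding the statement, the only genuinely new computation being an elementary check on the endpoints of an interval. First I would record the relevant numerics in the conical case $f=n-1$: from the definitions one has $\beta_{n-1}=-n/2$ and $\gamma_{n-1}=1-n/2$, and the indicial roots attached to the eigenvalue $\mu=0$ are $\zeta^+_{0,n-1}=\gamma_{n-1}+|\gamma_{n-1}|=0$ and $\zeta^-_{0,n-1}=\gamma_{n-1}-|\gamma_{n-1}|=2-n$, so the innermost non-indicial interval is indeed $I_{n-1}=(2-n,0)$. The core task is then to verify the inclusion $(\beta_{n-1},2+\beta_{n-1})=(-n/2,2-n/2)\subset(2-n,0)=I_{n-1}$. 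This reduces to the two inequalities $-n/2>2-n$ and $2-n/2<0$, both equivalent to $n/2>2$, hence both valid precisely when $n>4$. Since $I_{n-1}$ is by Definition \ref{def:ind:sets} a connected component of $\nonind^{n-1}_{\mathcal M_{g}}$, it contains no indicial root, and therefore $\mathfrak I^{n-1}_{\beta_{n-1}}=\mathfrak I^{n-1}_{\mathcal M_{g}}\cap(\beta_{n-1},2+\beta_{n-1})=\emptyset$.

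With the indicial set cleared, I would invoke the Lesch-type structural fact recalled just above the theorem: when $\mathfrak I^{n-1}_{\beta_{n-1}}=\emptyset$ the quotient $\mathcal Q(\mathcal C_{g,\beta_{n-1}})=\mathcal D_{\rm max}(\mathcal C_{g,\beta_{n-1}})/\mathcal D_{\rm min}(\mathcal C_{g,\beta_{n-1}})$ is trivial, so the minimal and maximal domains coincide. On the other hand, Proposition \ref{self-ad-delta} applies exactly here, because $\beta=\beta_{n-1}=\beta_f$, and gives that $\mathcal C_{g,\beta_{n-1}}$ is symmetric. For a symmetric operator, $\mathcal D_{\rm min}$ is the domain of the closure and $\mathcal D_{\rm max}$ is the domain of the adjoint, so their coincidence $\mathcal D_{\rm min}=\mathcal D_{\rm max}$ is literally the statement that the closure equals the adjoint, i.e. essential self-adjointness. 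This settles the first assertion.

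For the Fredholm statement I would note that the same endpoint computation yields $\beta_{n-1}\in I_{n-1}\subset\nonind^{n-1}_{\mathcal M_{g}}$ (again the lower endpoint requires $n>4$), so by Theorem \ref{con:mazzeo} the bounded map (\ref{bound:t}) is Fredholm for every $\beta\in I_{n-1}$, with index constant along this connected component. It then remains only to fix the value of this constant at $\beta=\beta_{n-1}$. Here I would use that, in the regime $\mathcal D_{\rm min}=\mathcal D_{\rm max}$, the kernel and cokernel of the bounded operator (\ref{bound:t}) are identified with those of the unique closed extension of $\mathcal C_{g,\beta_{n-1}}$ on $x^{\beta_{n-1}}\mathcal H^0_{\rm b}(X)$, so the two Fredholm indices agree; since that extension is self-adjoint, its index vanishes. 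Hence (\ref{bound:t}) has index $0$ at $\beta_{n-1}$, and by constancy of the index on the component $I_{n-1}$ it equals $0$ for all $\beta\in I_{n-1}$.

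The main obstacle is this final identification: matching the Fredholm index of the bounded map between the weighted $b$-Sobolev spaces with the vanishing index of the self-adjoint closed extension. Everything else is a bookkeeping assembly of quoted results, with the elementary inequalities forcing $n>4$ being the only new input. The index-matching itself is precisely the content of the parametrix and domain analysis underlying Theorem \ref{con:mazzeo} and the results of \cite{mazzeo1991elliptic,lesch1997differential}, so I would phrase it as an application of ``the discussion above'' rather than reprove it.
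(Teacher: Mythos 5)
Your proposal is correct and follows essentially the same route as the paper: the elementary endpoint check showing $[\beta_{n-1},2+\beta_{n-1}]\subset I_{n-1}$ exactly when $n>4$, hence $\mathfrak I^{n-1}_{\beta_{n-1}}=\emptyset$ and $\mathcal D_{\rm min}=\mathcal D_{\rm max}$ by the Lesch-type description of the quotient, combined with symmetry from Proposition \ref{self-ad-delta} to get essential self-adjointness, and then Theorem \ref{con:mazzeo} plus the vanishing index of the self-adjoint extension to conclude index $0$ throughout $I_{n-1}$. The paper's own argument is precisely this assembly (it delegates the index-matching between the bounded and unbounded realizations to the cited references, just as you do), so no further comparison is needed.
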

		
		As explained in \cite[Section 2]{de2022scalar}, at least if $n> 4$ this information suffices to carry out the proof of Theorem \ref{presc:ex} in this conical case (for more details of the argument together with a checking on how the missing case $n=4$ may be recovered by an alternative method, we refer to the proof of Theorem \ref{presc:ex} below). 
		
		We now turn to the (non-conical) purely wedge case ($1\leq f<n-1$). Here, things quickly get complicated because the indicial decomposition $\mathbb R=\mathfrak I^f_{\mathcal M_{g}}\sqcup\nonind^f_{\mathcal M_{g}}$  fails to sharply determine
		the mapping properties of (\ref{bound:t}), as no analogue of Theorem \ref{con:mazzeo} is expected to hold. Naturally enough, now the singular stratum $(Y,g_Y)$ also plays a role by contributing an extra term to the indicial operator (\ref{ind:op:def}), so as to form the so-called {\em normal operator} $\mathcal N_{\mathcal M_g}$, 
		which may be identified to $-t^2\Delta_{g_c}$, where $g_c=dt^2+t^2g_Z+\delta_u$, $\delta_u=|du|^2$, is the natural metric on $(0,+\infty)_{t}\times Z\times\mathbb R^b_u$, viewed as the tangent cone arising from the one parameter family of dilations $\mathcal T_\rho(y_0)(x,y,z)=(\rho x, y_0+\rho(y-y_0),z)$, $y_0\in Y$, as $\rho\to+\infty$. Thus,  
		\[
		\mathcal N_{\mathcal M_g}=-
		\mathsf D^2_{t}-\left(f-1\right)\mathsf D_{t}-\Delta_{g_Z} -t^2\Delta_{\delta u}. 
		\]
		
		Comparison with the indicial operator in (\ref{ind:op:def}) shows that  the extra term in 
		$\mathcal N_{\mathcal M_g}$ spoils the invariance under dilations in $t$, which reveals an essential departure from the conical case, but notice that both invariances under dilations in $(t,u)$ and translations in $u$ are retained. As in \cite{mazzeo1991elliptic} we explore this by
		first Fourier transforming in the $u$-direction with $\xi\in T^*\mathbb R^b_u$ as dual variable. We next set $\vartheta=\xi/|\xi|_{\delta_u}\in S^*\mathbb R^b_u$, the spherical conormal bundle, and $s=|\xi|_{\delta_u}t$ so as to obtain the equivalent ``Bessel-type'' normal operator
		\[
		{\mathcal B}_{\mathcal M_g}(\vartheta)=-
		\mathsf D^2_s-\left(f-1\right)\mathsf D_s -\Delta_{g_Z}+ s^2|\vartheta|_{\delta_u}^2.
		\]
		Note that the explicit dependence on $\vartheta$ is illusory since $|\vartheta|_{\delta_u}=1$. Thus, 
		$	{\mathcal B}_{\mathcal M_g}$ acts on functions in
		$\mathcal C_{Z}:=[0,+\infty)_s\times Z$, the (infinite) cone over $Z$ endowed with the conical metric $ds^2+s^2g_Z$. 
		
		To proceed, consider the spaces
		\[
		\mathcal H^{\sigma,\beta,l}(\mathcal C_{Z})=\{u\in \mathcal D'(\mathcal C_{Z});\phi u\in s^\beta \mathcal H_{\rm b}^\sigma(\mathcal C_{Z}), (1-\phi)u\in s^{-l}H^\sigma(\mathcal C_{Z})\},
		\]
		where $\phi\in C^\infty_{\rm cpt}(\mathcal C_{Z})$ with $\phi=1$ near $s=0$. Thus, we may view ${\mathcal B}_{\mathcal M_g}$ as an operator
		\begin{equation}\label{norm:bd}
			{\mathcal B}_{\mathcal M_g}:\mathcal H^{\sigma,\beta,l}(\mathcal C_{Z})\to \mathcal H^{\sigma-2,\beta,l}(\mathcal C_{Z}),
		\end{equation} 
		whose mapping properties are closely tied to Fredholmness for (\ref{bound:t}). Indeed, the next criterion follows from the general theory developed  in \cite{mazzeo1991elliptic,schulze1998boundary,egorov2012pseudo}.
		
		\begin{theorem}\label{fred:wedge}
			The map (\ref{bound:t}) is Fredholm provided $\beta \in \nonind^f_{\mathcal M_g}$ and (\ref{norm:bd}) is invertible (for such $\beta$). If (\ref{norm:bd}) is only known to be injective then (\ref{bound:t}) is semi-Fredholm (that is, it has a closed range and is essentially injective in the sense that its kernel is finite dimensional).
		\end{theorem}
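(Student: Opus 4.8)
The plan is to recognize this as an instance of the edge (wedge) pseudodifferential calculus of \cite{mazzeo1991elliptic} and to deduce Fredholmness from the invertibility of the appropriate symbolic data. The operator $\mathcal M_g=-x^2\Delta_g$ is an elliptic edge differential operator of order $2$: its interior principal symbol is nondegenerate (being that of the Laplacian of the nondegenerate wedge metric $g$, suitably rescaled near the stratum), so that the only obstructions to good mapping properties are concentrated at the singular stratum and are measured by the two remaining pieces of symbolic information, namely the indicial family $I_{\mathcal M_{g,\beta}}(\zeta)$ and the normal operator $\mathcal N_{\mathcal M_g}$ (equivalently, its Bessel reduction $\mathcal B_{\mathcal M_g}$). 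The two hypotheses $\beta\in\nonind^f_{\mathcal M_g}$ and invertibility of (\ref{norm:bd}) are precisely the statements that these pieces of data are invertible, so the claim is that such ``full ellipticity'' forces (\ref{bound:t}) to be Fredholm.

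First I would construct a parametrix in the small edge calculus. Interior ellipticity yields a first approximate inverse $Q_0$ with $\mathcal M_{g,\beta}Q_0=I-E_0$, where the error $E_0$ is an edge operator whose leading behavior at the stratum is governed by $\mathcal N_{\mathcal M_g}$. Using the invertibility of the normal operator (\ref{norm:bd}) one solves the model problem fiberwise over the edge cotangent directions and corrects $Q_0$ to an operator $Q$ for which both remainders $\mathcal M_{g,\beta}Q-I$ and $Q\mathcal M_{g,\beta}-I$ lie in the residual part of the calculus. Here the non-indicial condition $\beta\in\nonind^f_{\mathcal M_g}$ enters decisively: it guarantees that $I_{\mathcal M_{g,\beta}}(\zeta)$ is invertible along the weight line determined by $\beta$, so that the residual remainders actually map $x^\beta\mathcal H^\sigma_{\rm b}(X)$ into a space $x^{\beta'}\mathcal H^{\sigma'}_{\rm b}(X)$ with $\beta'>\beta$ and $\sigma'>\sigma$. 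By the compact inclusions recorded in Remark \ref{scale:sob} the remainders are then compact, and a two-sided parametrix modulo compact operators gives Fredholmness of (\ref{bound:t}).

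For the weaker hypothesis in which (\ref{norm:bd}) is merely injective, the model problem can be solved in only one direction, so the construction above produces only a left parametrix $Q$ with $Q\mathcal M_{g,\beta}=I-R$, where $R$ is compact. A left parametrix with compact remainder forces $\ker\mathcal M_{g,\beta}$ to be finite dimensional and the range to be closed; this is exactly the asserted semi-Fredholmness (essential injectivity).

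The main obstacle will be the verification that the residual remainders produced by the parametrix construction are genuinely compact on the weighted scale, since this is where the interplay between the non-indicial condition and the precise mapping theory of the residual edge calculus must be controlled; in the injective-only case the additional subtlety is to check that injectivity of the normal operator alone suffices to push the error into the compact residual calculus on the single side where the parametrix exists. As all of this is by now standard in the edge calculus, in practice I would carry out the argument by directly invoking \cite{mazzeo1991elliptic,schulze1998boundary,egorov2012pseudo}, indicating only the dictionary between the hypotheses and the corresponding symbol invertibilities.
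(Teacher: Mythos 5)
Your proposal is correct and follows essentially the same route as the paper, which simply derives Theorem \ref{fred:wedge} by invoking the general edge calculus of \cite{mazzeo1991elliptic,schulze1998boundary,egorov2012pseudo}: the hypotheses on $\beta$ and on (\ref{norm:bd}) are exactly the invertibility of the indicial family and of the normal operator, and the parametrix construction (two-sided in the invertible case, left only in the injective case) is the standard mechanism behind the cited results. Your sketch of that mechanism is accurate, and the paper adds nothing beyond the citation.
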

		
		Thus, establishing Fredholmness of (\ref{bound:t}), which should be thought of as a preliminary step in probing its mapping properties, gets reduced to finding a range of weights for which invertibility of (\ref{norm:bd}) holds true. 
		Note that ${\mathcal B}_{\mathcal M_g}$ is {\em elliptic} as in \cite[Definition 5.3]{mazzeo1991elliptic} with the {\em same} indicial operator as $\mathcal M_g$. Thus, \cite[Lemma 5.5]{mazzeo1991elliptic} implies that
			the normal operator in (\ref{norm:bd}) is Fredholm provided $\beta \in \nonind^f_{\mathcal M_g}$. 
		It is known that the index of $\mathcal B_{\mathcal M_g}$ does not depend on the pair $(\sigma,l)$ and remains the same as long as $\beta$ varies in a given connected component of $\nonind^f_{\mathcal M_g}$.  
		Similarly, by  \cite[Corollary 5.7]{mazzeo1991elliptic} the kernel of $\mathcal B_{\mathcal M_g}$ does not depend on $(\sigma,l)$ as well, although it might change when $\beta$ crosses $\mathfrak I^f_{\mathcal M_g}$. Fortunately, we shall see that this kernel turns out to be trivial in an appropriate range of weights.
		
		\begin{proposition}\label{norm:inj}
			The normal operator $\mathcal B_{\mathcal M_g}$ in (\ref{norm:bd}) is injective if $\beta>1-f$, $f>1$. 
		\end{proposition}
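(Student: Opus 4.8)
The plan is to exploit the fact that $\mathcal B_{\mathcal M_g}$ acts diagonally with respect to the spectral decomposition of $\Delta_{g_Z}$, thereby reducing the injectivity question to a family of ordinary differential equations of modified Bessel type, one for each eigenvalue $\mu\in{\rm Spec}(\Delta_{g_Z})$. First I would take $u\in\mathcal H^{\sigma,\beta,l}(\mathcal C_Z)$ with $\mathcal B_{\mathcal M_g}u=0$ and expand it along an $L^2(Z,d{\rm vol}_{g_Z})$-orthonormal basis of eigenfunctions of $-\Delta_{g_Z}$, writing $u=\sum_\mu u_\mu(s)\phi_\mu(z)$. Since the first three terms of $\mathcal B_{\mathcal M_g}$ involve only $s$ and the spectral variable while $s^2|\vartheta|^2_{\delta_u}=s^2$ is scalar, projecting the equation onto each eigenspace shows that every modal coefficient solves, in the distributional sense,
\[
-\mathsf D^2_s u_\mu - (f-1)\mathsf D_s u_\mu + \mu\, u_\mu + s^2 u_\mu = 0,
\]
and elliptic regularity promotes $u_\mu$ to a genuine solution on $(0,+\infty)$. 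The substitution $u_\mu=s^{\gamma_f}w_\mu$ turns this into the modified Bessel equation $s^2 w_\mu''+s w_\mu'-(s^2+\nu_\mu^2)w_\mu=0$ of order $\nu_\mu=\sqrt{\gamma_f^2+\mu}$, whose solution space is spanned by $I_{\nu_\mu}$ and $K_{\nu_\mu}$. Thus $u_\mu(s)=s^{\gamma_f}\bigl(A_\mu I_{\nu_\mu}(s)+B_\mu K_{\nu_\mu}(s)\bigr)$, with indicial exponents at $s=0$ equal to $\gamma_f\pm\nu_\mu=\zeta^\pm_{\mu,f}$ as in (\ref{indicial:r}).

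The two asymptotic regimes then pin down the coefficients. As $s\to\infty$ one has $I_{\nu_\mu}(s)\sim e^{s}/\sqrt{s}$ while $K_{\nu_\mu}(s)\sim e^{-s}/\sqrt{s}$; since membership in $s^{-l}H^\sigma$ at infinity tolerates only polynomial growth, the exponentially growing branch is excluded and $A_\mu=0$. The surviving solution $s^{\gamma_f}K_{\nu_\mu}(s)$ behaves like $s^{\gamma_f-\nu_\mu}=s^{\zeta^-_{\mu,f}}$ as $s\to 0$; here the hypothesis $f>1$ guarantees $\nu_\mu\geq|\gamma_f|>0$, so the leading term is genuinely $s^{-\nu_\mu}$, with no borderline logarithm affecting the exponent. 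On the b-density $s^{-1}ds\,d{\rm vol}_{g_Z}$ a modal function $\sim s^{c}$ lies in $s^\beta\mathcal H^\sigma_{\rm b}$ near the tip precisely when $c>\beta$; hence the $K$-branch belongs to the space only if $\zeta^-_{\mu,f}>\beta$. But $\zeta^-_{\mu,f}=\gamma_f-\sqrt{\gamma_f^2+\mu}$ is strictly decreasing in $\mu$ and attains its maximum $2\gamma_f=1-f$ at $\mu=0$, so $\zeta^-_{\mu,f}\leq 1-f<\beta$ for every $\mu$. Therefore $B_\mu=0$ as well, each mode vanishes, and $u\equiv 0$, which is the desired injectivity. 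Observe that this is exactly the point where the threshold $\beta>1-f$ enters, as it is the smallest weight that excludes the most singular ($\mu=0$) $K$-solution.

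I expect the main obstacle to be the rigorous justification of the modal reduction: one must argue that membership of $u$ in $\mathcal H^{\sigma,\beta,l}(\mathcal C_Z)$ genuinely controls each radial coefficient $u_\mu$ in the corresponding one-dimensional weighted space at both ends simultaneously and uniformly over the infinitely many modes, so that the pointwise ODE dichotomy can be applied mode by mode and the resulting vanishing can be summed back. The asymptotic matching against $\beta$ is clean once this is in place, but controlling the interchange of the sum over $\mu$ with the boundary behaviour (and handling the integer-order cases of $K_{\nu_\mu}$, where the leading exponent is nonetheless still $-\nu_\mu$) is the delicate bookkeeping I would need to carry out carefully.
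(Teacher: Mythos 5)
Your proposal is correct and follows essentially the same route as the paper: separation of variables along the spectral decomposition of $\Delta_{g_Z}$, reduction to a modified Bessel equation for each mode, elimination of the $\mathcal I_\nu$-branch by its exponential growth at infinity, and elimination of the $\mathcal K_\nu$-branch because its leading exponent $\gamma_f-\nu_\mu\leq 2\gamma_f=1-f$ at $s\to 0$ is incompatible with membership in $s^\beta\mathcal H^{\sigma}_{\rm b}$ when $\beta>1-f$. The only differences are cosmetic (your large-$s$ asymptotic $e^s/\sqrt{s}$ for $\mathcal I_\nu$ is in fact the standard one) and your closing caveats about uniformity over modes, which the paper does not elaborate either.
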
 
		
		\begin{proof}
			By separation of variables, we find that a solution $w\in \mathcal H^{\sigma,\beta,l}(\mathcal C_{Z})$ of the  homogeneous equation 
			${\mathcal B}_{\mathcal M_g}w=0$  
			decomposes as 
			\[
			w=\sum_{\mu\in {\rm Spec}(\Delta_{g_Z})} w_\mu,  \quad w_\mu\in {\mathcal H_{\mu}^{\sigma,\beta,l}(\mathcal C_{Z})},
			\]
			where ${\mathcal H_{\mu}^{\sigma,\beta,l}(\mathcal C_{Z})}\subset {\mathcal H^{\sigma,\beta,l}(\mathcal C_{Z})}$ selects the eigenspace of $\Delta_{g_Z}$ associated to $\mu$. Thus, it suffices to check that each $w_\mu$ vanishes. Since  ${\mathcal B}_{\mathcal M_g}w_\mu=0$, $w_\mu$
			can be expressed
			in terms of the modified Bessel functions $\mathcal I_\nu(s)$ and $\mathcal K_\nu(s)$, where 
			\[
			\nu=\nu_{\pm}=\pm\sqrt{\gamma_f^2+\mu}.
			\]
			Precisely,
			\[
			w_\mu(s)=s^{\gamma_f}\left(c_1\mathcal I_\nu(s)+c_2\mathcal K_\nu(s)\right),
			\]
			where $c_i$ is a constant. Since $\nu\neq 0$, the asymptotical behavior of these Bessel functions are as in the table below.
			\begin{center}
				\begin{tabular}{ |c|c|c| } 
					\hline
					& $s\to 0$ & $s\to +\infty$ \\ 
					\hline
					$\mathcal I_\nu(s)$ & $\sim s^{\nu}$ & $\sim e^s/s$ \\ 
					\hline
					$\mathcal K_\nu(s)$ & $\sim s^{-|\nu|}$ & $\sim e^{-s}/\sqrt{s}$ \\ 
					\hline
				\end{tabular}
			\end{center}
			The exponential growth of $\mathcal I_\nu$ at infinity forces $c_1=0$, whereas the exponential decay of $\mathcal K_\nu$ poses no restriction on $c_2$. Since $w_\mu(s)\sim s^{\gamma_f-|\nu|}$ as $s\to 0$ and 
			\[
			\gamma_f-|\nu|\leq \gamma_f-|\gamma_f|=2\gamma_f=1-f,
			\]
			the result follows.
		\end{proof}

		Although injectivity of the normal operator suffices for our purposes as it provides, via Theorem \ref{fred:wedge}, a {\em left} generalized inverse for $\mathcal M_{g,\beta}$ to which the argument in the proof of Theorem \ref{self:ad:lap} may be applied, it is known that the normal operator is surjective  in the innermost non-indicial interval $I_f$, at least if $f>1$. 
		Since this information is not used in the sequel, we omit its proof.

		\begin{theorem}\label{norm:surj}
			The normal operator (\ref{norm:bd}) is surjective if $\beta\in I_f$, $f>1$.
		\end{theorem}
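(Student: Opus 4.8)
The plan is to deduce surjectivity from the Fredholm property by a duality argument that mirrors the analysis already carried out in Proposition \ref{norm:inj}. Since $\beta\in I_f\subset\nonind^f_{\mathcal M_g}$, \cite[Lemma 5.5]{mazzeo1991elliptic} guarantees that $\mathcal B_{\mathcal M_g}$ in (\ref{norm:bd}) is Fredholm, so its range is closed and its cokernel is finite dimensional; surjectivity is therefore equivalent to the vanishing of this cokernel. Pairing the weighted b-Sobolev scales through the $L^2$-product induced by $d{\rm vol}_{\rm b}$, the dual of the target $\mathcal H^{\sigma-2,\beta,l}(\mathcal C_Z)$ is (up to the weight at infinity, where exponential decay renders the choice immaterial) the space $\mathcal H^{2-\sigma,-\beta,l'}(\mathcal C_Z)$, with the tip weight reflected from $\beta$ to $-\beta$. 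Consequently I would identify the cokernel of $\mathcal B_{\mathcal M_g}$ at weight $\beta$ with the kernel of the formal transpose $\mathcal B^\dagger_{\mathcal M_g}$ acting in $\mathcal H^{2-\sigma,-\beta,l'}(\mathcal C_Z)$, and it then suffices to show that this transpose kernel is trivial.

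Integrating by parts against $d{\rm vol}_{\rm b}=s^{-1}ds\,d{\rm vol}_{g_Z}$ yields $\mathsf D_s^\dagger=-\mathsf D_s$, while $-\Delta_{g_Z}$ and the potential $s^2$ are symmetric, so
\[
\mathcal B^\dagger_{\mathcal M_g}=-\mathsf D^2_s+(f-1)\mathsf D_s-\Delta_{g_Z}+s^2,
\]
which has exactly the shape of $\mathcal B_{\mathcal M_g}$ but with $\gamma_f$ replaced by $-\gamma_f$. Repeating the separation of variables from the proof of Proposition \ref{norm:inj}, any solution of $\mathcal B^\dagger_{\mathcal M_g}w=0$ decomposes over ${\rm Spec}(\Delta_{g_Z})$, and on the $\mu$-eigenspace the radial factor is a combination of $s^{-\gamma_f}\mathcal I_\nu(s)$ and $s^{-\gamma_f}\mathcal K_\nu(s)$ with $\nu=\sqrt{\gamma_f^2+\mu}$. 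As in that proof, the $e^s$ growth of $\mathcal I_\nu$ is incompatible with membership in the target space — the confining $s^2$-potential forces exponential decay as $s\to+\infty$ — so only the $\mathcal K_\nu$-mode survives, with $w_\mu(s)\sim s^{-\gamma_f-|\nu|}$ as $s\to 0$.

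It remains to rule out such modes at weight $-\beta$. A function behaving like $s^{a}$ near $s=0$ lies in $s^{-\beta}\mathcal H^0_{\rm b}$, i.e.\ is square integrable against $s^{2\beta}d{\rm vol}_{\rm b}$, precisely when $a>-\beta$; hence $w_\mu$ would belong to the dual space only if $-\gamma_f-|\nu|>-\beta$, that is $\beta>\gamma_f+|\nu|$. Since $f>1$ forces $\gamma_f<0$, one has $|\nu|\geq|\gamma_f|=-\gamma_f$ for every $\mu$, whence $\gamma_f+|\nu|\geq 0$; as $\beta\in I_f=(1-f,0)$ satisfies $\beta<0$, this inequality never holds, so each $w_\mu$, and therefore $w$ itself, must vanish. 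This gives $\ker\mathcal B^\dagger_{\mathcal M_g}=\{0\}$ in the dual space, a trivial cokernel, and hence surjectivity of $\mathcal B_{\mathcal M_g}$ for $\beta\in I_f$. I expect the main obstacle to be the rigorous identification of the cokernel with the transpose kernel in the weighted b-calculus, and in particular justifying in the Sobolev (not merely pointwise) category that exponential decay at infinity eliminates the $\mathcal I_\nu$-mode; both points, however, are direct analogues of estimates already established for Proposition \ref{norm:inj}. As an alternative I would argue constructively, building a right inverse eigenmode-by-eigenmode from the Green's function $W^{-1}\mathcal I_\nu(s_<)\mathcal K_\nu(s_>)$ and summing over ${\rm Spec}(\Delta_{g_Z})$, in which case the difficulty shifts to uniform-in-$\mu$ control of the Bessel kernels as $\mu\to+\infty$.
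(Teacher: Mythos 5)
The paper deliberately omits its own proof of Theorem \ref{norm:surj} (``Since this information is not used in the sequel, we omit its proof''), so there is no in-text argument to compare yours against; judged on its own terms, your duality argument is correct and is the natural companion to Proposition \ref{norm:inj}. The key steps all check out: $I_f=(1-f,0)$ is by definition a component of the non-indicial set, so \cite[Lemma 5.5]{mazzeo1991elliptic} gives closed range and a finite-dimensional cokernel, which is identified with the kernel of the formal adjoint in the $d{\rm vol}_{\rm b}$-dual space at the reflected weight $-\beta$; since $d{\rm vol}_{\rm b}=s^{-1}ds\,d{\rm vol}_{g_Z}$ is the $\mathsf D_s$-invariant measure, $\mathsf D_s^\dagger=-\mathsf D_s$ and your formula for $\mathcal B^\dagger_{\mathcal M_g}$ is right; the indicial roots of the adjoint are $-\gamma_f\pm\nu$ (the reflections of $\gamma_f\pm\nu$), the radial solutions are $s^{-\gamma_f}(c_1\mathcal I_\nu+c_2\mathcal K_\nu)$, the $\mathcal I_\nu$-mode is killed by its $e^s$ growth against any polynomial weight at infinity, and the surviving $\mathcal K_\nu$-mode behaves like $s^{-\gamma_f-|\nu|}$ with $-\gamma_f-|\nu|\leq 0$, which fails the membership criterion $-\gamma_f-|\nu|>-\beta$ because $\beta<0$ throughout $I_f$ while $\gamma_f+|\nu|\geq 0$ for $f>1$. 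Two points deserve slightly more care than you give them, though neither is a gap: elements of the kernel of the Banach-space adjoint are a priori distributions of negative Sobolev order, so one must invoke interior elliptic regularity (together with the conormal regularity results of \cite[Corollary 3.24 and Section 7]{mazzeo1991elliptic} that the paper already uses for $\Pi_\beta$) before separation of variables and pointwise Bessel asymptotics are legitimate; and the independence of this adjoint kernel of the auxiliary indices $(\sigma,l)$ is needed to reduce to a single convenient dual space, which is supplied by the analogue of \cite[Corollary 5.7]{mazzeo1991elliptic} quoted in the text. Your alternative constructive route via the Green kernel $W^{-1}\mathcal I_\nu(s_<)\mathcal K_\nu(s_>)$ would also work but, as you note, requires uniform-in-$\mu$ estimates that the duality argument avoids entirely.
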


		In view of Theorem \ref{fred:wedge}, it follows from the various results established above that (\ref{bound:t}) is Fredholm provided $\beta\in I_f$, $f>1$ (if we ignore Theorem \ref{norm:surj}, it is at least semi-Fredholm and essentially injective). Unfortunately, this does {\em not} suffice
		to proceed as in the conical case in order to detect self-adjoint extensions starting from $\mathcal Q(\mathcal C_{g,\beta})$; see \cite[Section 2.5]{mazzeo2012analytic} for a discussion of the issues involved. 
		We may, however, combine this information with the reasoning in \cite[Section 1]{albin2022cheeger}, which by its turn is based on arguments in \cite{gil2003adjoints,albin2012signature,albin2016index,albin2018hodge}, to check that in most cases the Laplacian $\mathcal C_{g,\beta_f}$ in (\ref{mg:unbound}), which is symmetric by Proposition \ref{self-ad-delta}, has good mapping properties. As applied to our context, the idea is that the existence of a (left) generalized inverse for (\ref{bound:t}) with $\beta\in(\beta_f,2+\beta_f)$ leads to an extra regularity in the weighted Sobolev scale for elements of $\mathcal D_{\rm max}(\mathcal C_{g,\beta_f})$ which forces them to actually belong to $\mathcal D_{\rm min}(\mathcal C_{g,\beta_f})$ by  characterizations of this latter space appearing in \cite[Proposition 3.6]{gil2003adjoints} and \cite[Theorem 4.2]{gil2013closure}.

		\begin{theorem}\label{self:ad:lap}
			Let $(X,g)$ be a wedge space satisfying either $3\leq f\leq n-1$ or $f=1$ and  the cone angle is at most $2\pi$.
			Then the Laplacian $\mathcal C_{g,\beta_f}$ defined in (\ref{mg:unbound}) is 
			essentially self-adjoint. Moreover, letting $\mathcal D_{g,\beta_f}$ be the domain of this self-adjoint extension,
			the map
			\begin{equation}\label{mg:fred}
				\mathcal C_{g,\beta_f}+\lambda:\mathcal D_{g,\beta_f}\to x^{\beta_f}\mathcal H^0_{\rm b}(X)
			\end{equation}
			is Fredholm of index $0$ for any $\lambda\in\mathbb R$.
		\end{theorem}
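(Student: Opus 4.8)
The plan is to reduce essential self-adjointness to the equality $\mathcal D_{\rm min}(\mathcal C_{g,\beta_f})=\mathcal D_{\rm max}(\mathcal C_{g,\beta_f})$. Indeed, $\mathcal C_{g,\beta_f}$ is symmetric by Proposition \ref{self-ad-delta}, and by (\ref{vol:rel}) the ambient space $x^{\beta_f}\mathcal H^0_{\rm b}(X)$ is nothing but the physical $L^2(X,d{\rm vol}_g)$, so the statement amounts to saying that $-\Delta_g$, with core $C^\infty_{\rm cpt}(X_s)$, admits a single closed extension on $L^2$. Following the reasoning of \cite[Section 1]{albin2022cheeger}, itself based on \cite{gil2003adjoints,gil2013closure,albin2012signature,albin2016index,albin2018hodge}, I would first exhibit a weight $\beta'\in(\beta_f,2+\beta_f)$ for which the bounded map (\ref{bound:t}) carries a left generalized inverse, and then transplant the ensuing gain of decay and regularity to an arbitrary $u\in\mathcal D_{\rm max}(\mathcal C_{g,\beta_f})$, thereby showing $u\in\mathcal D_{\rm min}(\mathcal C_{g,\beta_f})$.

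To manufacture the generalized inverse I rely on Theorem \ref{fred:wedge}: it is enough to find $\beta'\in\nonind^f_{\mathcal M_g}$ at which the normal operator (\ref{norm:bd}) is injective. Writing $\beta_f=\gamma_f-1$ and $2+\beta_f=\gamma_f+1$, the window $(\beta_f,2+\beta_f)$ is the open interval of radius $1$ centered at $\gamma_f$. When $3\leq f\leq n-1$ one has $\gamma_f\leq-1$, and a direct comparison of endpoints gives $(\beta_f,2+\beta_f)\subseteq I_f=(1-f,0)$; hence any $\beta'$ in the window is automatically non-indicial and satisfies $\beta'>1-f$, so injectivity of (\ref{norm:bd}) follows from Proposition \ref{norm:inj}. (Alternatively one may select $\beta'\in I_f$ and appeal to Theorem \ref{norm:surj} for genuine invertibility, but this is not needed.) When $f=1$ we have $\gamma_1=0$, $\beta_1=-1$ and the window equals $(-1,1)$; here the hypothesis that the cone angle not exceed $2\pi$ forces the smallest nonzero eigenvalue of $\Delta_{g_Z}$ to obey $\mu_1\geq 1$, so that the nonzero-eigenvalue indicial roots $\pm\sqrt{\mu}$ ($\mu\geq\mu_1$) have absolute value at least $1$ and thus lie outside $(-1,1)$, leaving the double root at $0$ coming from $\mu=0$ as the only indicial root inside. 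Choosing $\beta'\in(0,1)$, I would re-run the Bessel analysis of Proposition \ref{norm:inj} allowing the order $\nu=0$: the exponential growth of $\mathcal I_\nu$ again annihilates one constant, while the remaining solution behaves like $\mathcal K_0(s)\sim-\log s$ for $\mu=0$ and like $\mathcal K_{\sqrt\mu}(s)\sim s^{-\sqrt\mu}$ for $\mu>0$, none of which lies in $s^{\beta'}\mathcal H^0_{\rm b}$ once $\beta'>0$; thus (\ref{norm:bd}) is injective at such $\beta'$.

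With a left generalized inverse at $\beta'$ secured, I would carry out the regularity upgrade. For $u\in\mathcal D_{\rm max}(\mathcal C_{g,\beta_f})$ one has $\mathcal M_gu=-x^2\Delta_gu\in x^{\beta_f+2}\mathcal H^0_{\rm b}(X)\subseteq x^{\beta'}\mathcal H^0_{\rm b}(X)$; combining this with the characterizations of the minimal domain in \cite[Proposition 3.6]{gil2003adjoints} and \cite[Theorem 4.2]{gil2013closure}, which guarantee that the absence of normal-operator kernel across the window leaves no surviving indicial (or logarithmic) term in the expansion of $u$ near $Y$, one concludes that $u$ actually lies in $x^{\beta'}\mathcal H^2_{\rm b}(X)$ and hence in $\mathcal D_{\rm min}(\mathcal C_{g,\beta_f})$. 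This yields $\mathcal D_{\rm max}=\mathcal D_{\rm min}$, i.e. essential self-adjointness, and identifies $\mathcal D_{g,\beta_f}$ with this common domain. For the Fredholm assertion, the same regularity shows $\mathcal D_{g,\beta_f}\subseteq x^{\beta'}\mathcal H^2_{\rm b}(X)$ with $\beta'>\beta_f$ and order $2>0$; by Remark \ref{scale:sob} this inclusion into $x^{\beta_f}\mathcal H^0_{\rm b}(X)$ is compact (both inequalities being strict), so $\mathcal C_{g,\beta_f}$ has compact resolvent and discrete spectrum. Consequently $\mathcal C_{g,\beta_f}+\lambda$ is Fredholm for every $\lambda\in\mathbb R$, with index $0$ since a self-adjoint Fredholm operator has equal kernel and cokernel dimensions.

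The step I expect to be the main obstacle is the rigorous invocation of the Gil--Mendoza closure theory in the third paragraph: one must check that the left generalized inverse truly transplants the weight gain to elements of $\mathcal D_{\rm max}$ and that, within the window, no indicial or logarithmic term of the asymptotic expansion obstructs membership in $\mathcal D_{\rm min}$. This is precisely the point at which the dimension $f$ of the link intervenes, explaining both the exclusion of $f=2$ and the necessity of the ``at most $2\pi$'' bound when $f=1$, which is exactly what keeps the nonzero-eigenvalue roots away from the interior of the window. A subsidiary issue is the degenerate conical subcase $f=n-1$, where $b=0$ and the normal operator collapses to the indicial operator; there I would instead appeal to Theorem \ref{map:lap:cr:con}, handling the exceptional value $n=4$ by the alternative route alluded to earlier.
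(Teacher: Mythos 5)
Your proposal is correct and follows essentially the same route as the paper: symmetry at the weight $\beta_f$, a left generalized inverse for $\mathcal M_{g,\beta}$ obtained via Theorem \ref{fred:wedge} from injectivity of the normal operator at non-indicial weights filling the window $(\beta_f,2+\beta_f)$, the resulting weighted regularity gain for $\mathcal D_{\rm max}(\mathcal C_{g,\beta_f})$, the identification with $\mathcal D_{\rm min}(\mathcal C_{g,\beta_f})$, and finally compactness of the inclusion $\mathcal D_{g,\beta_f}\subset x^{\beta_f}\mathcal H^0_{\rm b}(X)$ for the Fredholm-of-index-zero claim. The only real divergence is that the paper proves the inclusion $\bigcap_{0<\varepsilon<2}x^{2+\beta_f-\varepsilon}\mathcal H^2_{\rm b}(X)\subset\mathcal D_{\rm min}(\mathcal C_{g,\beta_f})$ by an explicit approximation/duality argument with $u_n=x^{1/n}u$, whereas you outsource that step to the Gil--Mendoza characterization of the minimal domain; since only weights approaching $2+\beta_f$ from below matter for the gain, your restriction to $\beta'\in(0,1)$ in the cone-edge case is harmless and matches the paper's conclusion.
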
 
		
		\begin{proof}
			Let us initially assume that $f>3$ so that both $\beta_f$ and $2+\beta_f$ lie in $I_f$. 
			We have seen that $\mathcal M_{g,\beta}$ defined in (\ref{bound:t}) is semi-Fredholm and essentially injective provided $\beta\in I_f$. In particular, there exists a {\em left} generalized inverse $\mathcal G:x^\beta\mathcal H^{0}_{\rm b}(X)\to x^\beta\mathcal H^{2}_{\rm b}(X)$ for $\mathcal M_{g,\beta}$, which means that
			\[
			\overline{\mathcal G}\Delta_g={\rm Id}-\Pi_{\beta}:x^\beta\mathcal H^{2}_{\rm b}(X)\to x^\beta\mathcal H^{2}_{\rm b}(X), \quad \beta\in I_f,
			\]
			where $\overline{\mathcal G}=-\mathcal G x^2$ and $\Pi_{\beta}$ is the projection onto the kernel of $\mathcal M_{g,\beta}$. 
			Now take $u\in \mathcal D_{\rm max}(\mathcal C_{g,\beta_f})$ so that  $u=\overline{\mathcal G}\Delta_gu+\Pi_{\beta}u$.
			From the diagram
			
			\begin{tikzcd}[row sep=huge]
				&
				& 	x^{2+\beta_f}\mathcal H^0_{\rm b}(X) \arrow[r, hookrightarrow] &	x^{\beta}\mathcal H^0_{\rm b}(X) \arrow[r,"{\mathcal G}"]
				& x^{\beta}\mathcal H^2_{\rm b}(X) \\
				& 	\mathcal D_{\rm max}(\mathcal C_{g,\beta_f}) 
				\arrow[ur,"-x^2\Delta_{g}", bend right=0]
				\arrow[urr,bend right=10,"-x^2\Delta_{g}"]
				\arrow[urrr, bend right=15,"\overline{\mathcal G} {\Delta}_{g}"]
				&
				&
			\end{tikzcd}

			\noindent
			where the inclusion in the upper row comes from the Sobolev embedding in Remark \ref{scale:sob}, we see that $\overline{\mathcal G}\Delta_gu\in  x^{\beta}\mathcal H^2_{\rm b}(X)$. Also, since $\mathcal M_{g,\beta}\Pi_\beta u=0$ one expects that $\Pi_\beta u$
			is much more regular than it appears. Indeed, it already follows from \cite[Corollary 3.24]{mazzeo1991elliptic} that $\Pi_\beta u\in x^{\beta_f}\mathcal H^\sigma_{\rm b}(X)$, $\sigma\geq 0$. With some more work we may infer that
			$\Pi_{\beta}u\in x^{\beta}\mathcal H^2_{\rm b}(X)$ as well (this relies on
			the analysis in \cite[Section 7]{mazzeo1991elliptic} and uses that  no indicial root lies in the interval $(\beta_f,\beta]$, where the normal operator is already known to be injective).
			Hence, by setting $\varepsilon=2+\beta_f-\beta$
			we obtain the inclusion
			\begin{equation}\label{enjoy:ref}
				\mathcal D_{\rm max}(\mathcal C_{g,\beta_f})\subset \bigcap_{0<\varepsilon<2} x^{2+\beta_f-\varepsilon}\mathcal H^2_{\rm b}(X),
			\end{equation}
			that is, elements of $\mathcal D_{\rm max}(\mathcal C_{g,\beta_f})$ enjoy an extra ``weighted regularity''.
			We now claim that  this leads to 
			\begin{equation}\label{still:gkm}
				\mathcal D:=
				\bigcap_{0<\varepsilon<2} x^{2+\beta_f-\varepsilon}\mathcal H^2_{\rm b}(X)
				\subset 
				\mathcal D_{\rm min}(\mathcal C_{g,\beta_f}).
			\end{equation}  
			These inclusions imply 
			$\mathcal D_{\rm max}(\mathcal C_{g,\beta_f})\subset 	\mathcal D_{\rm min}(\mathcal C_{g,\beta_f})$, that is, $\mathcal C_{g,\beta_f}$ is essentially self-adjoint. 
			
			To prove (\ref{still:gkm}) 
			we will make use of the well-known fact that, since $\mathcal C_{g,\beta_f}=-\Delta_g$ is symmetric by Proposition \ref{self-ad-delta}, $u\in \mathcal D_{\rm max}(\mathcal C_{g,\beta_f})$ is in $\mathcal D_{\rm min}(\mathcal C_{g,\beta_f})$ if and only if 
			\[
			\left<\Delta_gu,v\right>_{x^{\beta_f}\mathcal H^0_{\rm b}(X)}=\left<u,\Delta_gv\right>_{x^{\beta_f}\mathcal H^0_{\rm b}(X)},
			\]
			for any $v\in\mathcal D_{\rm max}(\mathcal C_{g,\beta_f})$; compare with \cite[Lemma 5.12]{albin2012signature}.
			If $u\in\mathcal D$ set $u_n:=x^{1/n}u$, $n\in\mathbb N$, so that $u_n\in x^{2+\beta_f}\mathcal H^2_{\rm b}(X)$. 
	 Also, for any $\varepsilon\in (0,2)$,
			\[
			u_n\to u \quad {\rm in} \quad {x^{2+\beta_f-\varepsilon}\mathcal H^2_{\rm b}(X)},
			\]
			which gives
			\begin{equation}\label{wh:giv} 
			x^{\varepsilon}\Delta_gu_n\to x^{\varepsilon}\Delta_gu
			\quad {\rm in} \quad
			{x^{\beta_f}\mathcal H^0_{\rm b}(X)}.
			\end{equation}
			Moreover, since $\varepsilon\mapsto 2-\varepsilon$ is a bijection of $(0,2)$, besides (\ref{wh:giv}) we also have  
			$x^{-\varepsilon}\mathcal D_{\rm max}(\mathcal C_{g,\beta_f})\subset x^{\beta_f}\mathcal H^2_{\rm b}(X)$. 
			Hence, if $v\in \mathcal D_{\rm max}(\mathcal C_{g,\beta_f})$ we get
			\begin{eqnarray*}
				\left<\Delta_gu_n,v\right>_{x^{\beta_f}\mathcal H^0_{\rm b}(X)} 
				& = & \left<x^{\varepsilon}\Delta_gu_n,x^{-\varepsilon}v\right>_{x^{\beta_f}\mathcal H^0_{\rm b}(X)}\\
				& \to  & \left<x^{\varepsilon}\Delta_gu,x^{-\varepsilon}v\right>_{x^{\beta_f}\mathcal H^0_{\rm b}(X)}\\
				& = & \left<\Delta_gu,v\right>_{x^{\beta_f}\mathcal H^0_{\rm b}(X)}.
			\end{eqnarray*}
			Also, 
			\[
			\left<\Delta_gu_n,v\right>_{x^{\beta_f}\mathcal H^0_{\rm b}(X)}=	\left<u_n,\Delta_gv\right>_{x^{\beta_f}\mathcal H^0_{\rm b}(X)}\to
			\left<u,\Delta_gv\right>_{x^{\beta_f}\mathcal H^0_{\rm b}(X)},
			\]
			where in the first step we used that $u_n\in \mathcal D_{\rm min}(\mathcal C_{g,\beta_f})$ and in the last one that $u_n\to u$ in 
			$x^{\beta_f+\varepsilon}\mathcal H^2_{\rm b}(X)\subset x^{\beta_f}\mathcal H^0_{\rm b}(X)$.
			Thus, 
			\[
			\left<\Delta_gu,v\right>_{x^{\beta_f}\mathcal H^0_{\rm b}(X)}=\left<u,\Delta_gv\right>_{x^{\beta_f}\mathcal H^0_{\rm b}(X)},
			\]
			which means that $u\in \mathcal D_{\rm min}(\mathcal C_{g,\beta_f})$ and hence proves (\ref{still:gkm}).
			The fact that  (\ref{mg:fred}) is Fredholm now follows from standard arguments based on the compact inclusion $\mathcal D_{g,\beta_f}=\mathcal D_{\rm max}(\mathcal C_{g,\beta_f})\subset x^{\beta_f}\mathcal H^0_{\rm b}(X)$, which follows from (\ref{enjoy:ref}) and Remark \ref{scale:sob}. Finally, using that $I_{3}=(-2,0)=(\beta_3,2+\beta_3)$, the limiting case $f=3$ may be treated by the same argument since (\ref{enjoy:ref}) still holds true even if both $\beta_f$ and $2+\beta_f$  are indicial roots.
			
			We now consider the cone-edge case ($f=1$), in which the link is a circle of length $L$ (the cone angle). It follows from (\ref{indicial:r}) that $\zeta^\pm_{0,1}=0$, so the innermost non-indicial interval disappears. If 
			we assume that $L<2\pi$ then both $\beta_1=-1\in I_1^-:=(-2\pi/L,0)$ and $2+\beta_1=1\in I_1^+:=(0,2\pi/L)$ are non-indicial; note that $I^\pm_1$ now jointly play the role of ``innermost'' non-indicial intervals. 
			Using that $\mathcal K_0(s)\sim -\log(s/2)$ as $s\to 0$, Bessel asymptotics implies that the corresponding normal operator is injective for $\beta>-{2\pi/L}$.
			Hence, we may invoke Theorem \ref{fred:wedge}
		 to ensure that $\mathcal M_{g,\beta}$ is semi-Fredholm and essentially injective for any non-indicial $\beta>-2\pi/L$, which in particular provides a left generalized inverse for $\mathcal M_{g,\beta}$,  $\beta\in(-1,1)\backslash\{0\}$, to which 
			the argument in the previous paragraph may be applied. 
			Thus, essential self-adjointness holds provided $L\leq 2\pi$. 
		\end{proof}

		\begin{remark}\label{hart}
			If we further assume that $f>3$, so that $2+\beta_f$ is not an indicial root,
			then \cite[Theorem 4.2]{gil2013closure} actually identifies $\mathcal D_{g,\beta_f}$,
			the domain of the unique self-adjoint extension of $\mathcal C_{g,\beta_f}$, to $x^{2+\beta_f}\mathcal H^2_{\rm b}(X)$; see also \cite[Theorem 1.1]{hartmann2018domain} for another proof of this result which is functional analytical in nature and altogether avoids the use of the (so pervasive!) micro-local artillery. But we insist that this extra piece of information is not needed here, so we content ourselves with the formulation of Theorem \ref{self:ad:lap}, which suffices for our purposes.
		\end{remark}

		\begin{remark}\label{f:2}
			An obvious reason why the method above does not directly apply to the case $f=2$ is that  $\beta_2=-3/2$  lies far away from the range $\beta>-1$ where the normal operator is known to be injective, so that (\ref{enjoy:ref}) is unavailable (note that the innermost non-indicial interval here is $(-1,0)$). A similar difficulty appears in the cone-edge case with cone angle $L>2\pi$. 
			A possible way to circumvent this, equally convenient for our purposes, consists in determining a   
			range of weights where 
			$\mathcal C_{g,\beta}$ would be surjective, but we will not pursue this  because, as pointed out in Remarks \ref{donald:1} and \ref{donald:2}, those cases are not   really needed here.
		\end{remark}
		
		\begin{remark}\label{spec:disc} 
			The compact inclusion
			$\mathcal D_{g,\beta_f}\subset x^{\beta_f}\mathcal H^0_{\rm b}(X)$ also implies  that the spectrum of (\ref{mg:fred}) is discrete with each eigenspace of finite multiplicity.
		\end{remark}
		
			\begin{remark}\label{f:geq:3}
			Instead of $\mathcal M_{g}$, we may  work with the
			operator
			\begin{eqnarray*}
				\widetilde{\mathcal M}_{g}
				& = & x^{-\beta_f}\mathcal M_{g}x^{\beta_f}\\
				& = &-\mathsf D_x^2+2\mathsf D_x+\beta_f(2+\beta_f)-\Delta_{g_Z}-x^2\Delta_{g_Y}+o(1),
			\end{eqnarray*}
			acting
			on the weighted Sobolev scale based on the wedge volume element
			$d{\rm vol}_{g}$, so that the ``innermost'' indicial roots associated to $\mu=0$ are $2+\beta_f$ and $-\beta_f$. Since the ``symmetry weight'' now occurs ate $\beta=0$ and $f\geq 3$ implies that $(0,2)$ has no indicial root, in this case Theorem \ref{self:ad:lap} follows from \cite[Proposition 1.5]{albin2022cheeger}. 
		\end{remark}
		
		\begin{remark}\label{sing:anal:comp}
			It is instructive to compare the analysis above with  the {\em complete} edge case, in which instead of (\ref{ie:met}) we take 
			\[
			g|_U(x,y,z)=x^{-2}\left(dx^2+x^2g_Z(z)+g_Y(y)+o(x)\right). 
			\] 
			This includes the conformally compact, asymptotically hyperbolic case with $[g_Y]$ as conformal boundary (when $Z$ collapses into a point). Here we are interested in establishing the mapping properties of the operator $\mathcal P=-\Delta_g+\lambda$, where now $\lambda$ is a smooth function on $X_s$ converging to a fixed value $\lambda_*\in\mathbb R$ as one approaches $Y$. As instances of geometric problems where this kind of question arises we mention 
			the {singular} Yamabe problem \cite{mazzeo1991conformally,mazzeo1996construction}
			and the rigidity of hyperbolic space in the context of the AdS/CFT correspondence \cite{qing2003rigidity}. As usual, we work with the appropriate weighted Sobolev scale $x^\delta\mathcal H^\sigma_{\rm e}(X)$ based on the density $x^{-1}dxd{\rm vol}_{g_Y}d{\rm vol}_{g_Z}$. To simplify matters, let us assume that $b\geq 1$ and $b^2/4+\lambda_*>0$. Since
			\[
			\mathcal P|_U=-\mathsf D_x^2+b\mathsf D_x-\Delta_{g_Z}-x^2\Delta_{g_Y}+\lambda+o(1),
			\] 
			the indicial symbol is 
			\[
			I_{\mathcal P}(\zeta)=-\zeta^2+b\zeta-\Delta_{g_Z}+\lambda_*, 
			\]
			so that the ``innermost'' non-indicial interval is $\widehat I=(\delta_-,\delta_+)$, where 
			\[
			\delta_{\pm}=\frac{b}{2}\pm\sqrt{\left(\frac{b}{2}\right)^2+\lambda_*}.
			\]
			Also, a computation shows that the normal operator $\mathcal B_{\mathcal P}$ may be identified to $-\Delta_{\widehat g}+\lambda_*$, where $\widehat g=g_{\rm hyp}+g_Z$ is the product metric on $\mathbb H^{b+1}\times Z$.
			Now, if $u\in x^\delta\mathcal H^0_{\rm e}(X)$, $\delta>\delta_-$, satisfies $\mathcal B_{\mathcal P}u=0$ then $u\in x^{b/2}\mathcal H^0_{\rm e}(X)=L^2(X,d{\rm vol}_{g})$,
			so the invertibility of $\mathcal B_{\mathcal P}$ in $\widehat I$ boils down to checking that the $L^2$ spectrum of $(\mathbb H^{b+1}\times Z,\widehat g)$ is disjoint from $(-\infty,b^2/4)$ \cite{mckean1970upper}. It then follows from the micro-local analysis in \cite{mazzeo1991conformally} that $\mathcal P$ is Fredholm of index $0$ in this range of weights; see also \cite{ma1991laplacian} for a potential theoretic approach to this same result and note that \cite{lee2006fredholm} also treats the conformally compact case by  ``low-tech'' methods. But notice that here the vanishing of the index is readily guessed given that the essential self-adjointness of $\mathcal P$ at $\delta=b/2\in\widehat I$ is a classical accomplishment \cite{gaffney1951harmonic} (recall that $g$ is complete). In particular, the analogue of the delicate analysis leading to Theorem \ref{self:ad:lap} is not needed.
		\end{remark}

		\section{An existence result: the proof of Theorem \ref{presc:ex}}\label{resc:scalar}
		
		We present here the proof of Theorem \ref{presc:ex}. 
		We start with a result confirming that  the prescription problem for the scalar curvature of wedge metrics is invariant under diffeomorphisms of bounded distortion; compare with \cite[Theorem 2.1]{kazdan1975existence} and \cite[Proposition 2.2]{de2022scalar}. 
		
		\begin{proposition}\label{conf:dist}
			Let $\phi, \phi'\in C^0(X_s)\cap L^{\infty}(X_s)\hookrightarrow x^\beta\mathcal H^0_{\rm b}(X)$, $\beta\leq 0$. If ${\rm min}\,\phi<\phi'<{\rm max}\,\phi$ then for any $\varepsilon>0$ there exists a diffeomorphism $\Psi:X_s\to X_s$ of bounded distortion (in particular, preserving the quasi-isometry class of wedge metrics) such that $\|\phi\circ\Psi-\phi'\|_{x^\beta\mathcal H^0_{\rm b}(X)}<\varepsilon$.
		\end{proposition}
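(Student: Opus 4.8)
The plan is to adapt the rearrangement argument of Kazdan and Warner \cite[Theorem 2.1]{kazdan1975existence}, in the form used in \cite[Proposition 2.2]{de2022scalar}, the only genuinely new point being that all the rearrangement is confined to a compact region while a full neighborhood of the singular stratum $Y$ is left untouched, where $\Psi$ is taken to be the identity (so that bounded distortion there is automatic). Write $d\mu_\beta:=x^{-2\beta}\,d{\rm vol}_{\rm b}$ for the measure underlying the inner product (\ref{inner:def}) and put $C_0:=\|\phi\|_{L^\infty}+\|\phi'\|_{L^\infty}$. The embedding $C^0(X_s)\cap L^\infty(X_s)\hookrightarrow x^\beta\mathcal H^0_{\rm b}(X)$ assumed in the statement (compare Remark \ref{scale:sob}) means in particular that $\mu_\beta$ is a finite measure on $X_s$ whose mass is concentrated away from $Y$, so that $\mu_\beta(\{x<\rho\})\to 0$ as $\rho\to 0$.

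First I would localize. Given $\varepsilon>0$, choose $\rho\in(0,1)$ so small that $C_0^2\,\mu_\beta(\{x<\rho\})<\varepsilon^2/4$; since $|\phi\circ\Psi-\phi'|\le C_0$ pointwise for every diffeomorphism $\Psi$, the contribution of $\{x<\rho\}$ to $\|\phi\circ\Psi-\phi'\|_{x^\beta\mathcal H^0_{\rm b}(X)}^2$ stays below $\varepsilon^2/4$ irrespective of $\Psi$. It then remains to match $\phi'$ by $\phi\circ\Psi$ in $L^2(\mu_\beta)$ only over the compact connected manifold with boundary $K_\rho:=\{x\ge\rho\}$. On $K_\rho$ the continuous $\phi'$ attains its extrema, so the strict hypothesis $\min\phi<\phi'<\max\phi$ forces its range to be a compact interval $[a_\rho,b_\rho]\subset(\min\phi,\max\phi)$. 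Since $K_{\rho'}:=\{x\ge\rho'\}$ increases to $X_s$ as $\rho'\to 0$, one has $\min_{K_{\rho'}}\phi\to\min\phi$ and $\max_{K_{\rho'}}\phi\to\max\phi$; hence I may fix $\rho'\in(0,\rho)$ with $\min_{K_{\rho'}}\phi<a_\rho\le b_\rho<\max_{K_{\rho'}}\phi$, so that by the intermediate value theorem on the connected set $K_{\rho'}$ every value in $[a_\rho,b_\rho]$ is attained by $\phi$ at some interior point of $K_{\rho'}$.

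The hard part will be the rearrangement itself. Partition $K_\rho$ into finitely many pieces $A_1,\dots,A_N$ (images of small coordinate cubes, with $\mu_\beta(\partial A_j)=0$) on each of which $\phi'$ oscillates by less than a prescribed $\eta>0$, and on each pick a value $c_j\in[a_\rho,b_\rho]$ of $\phi'$, an interior point $q_j\in K_{\rho'}$ with $\phi(q_j)=c_j$, and a ball $B_j$ about $q_j$ on which $|\phi-c_j|<\eta$. Following \cite[Theorem 2.1]{kazdan1975existence} I would construct a diffeomorphism $\Psi_0$ of $K_{\rho'}$, supported in its interior, carrying a subset $A_j'\subset A_j$ with $\mu_\beta(A_j\setminus A_j')<\eta$ into $B_j$ for every $j$; because $K_{\rho'}$ is connected this is obtained by composing compactly supported diffeomorphisms that push each $A_j$ along a path to $q_j$ and compress it into $B_j$. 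This compression step, keeping $\Psi_0$ a global diffeomorphism while forcing $\phi\circ\Psi_0$ onto the prescribed level sets of $\phi$, is the substance of \cite{kazdan1975existence}, and I would import it essentially verbatim. On $\bigcup_j A_j'$ one then has $|\phi\circ\Psi_0-\phi'|<2\eta$, so taking $\eta$ small relative to $\varepsilon$, $C_0$ and $\mu_\beta(K_\rho)$ yields $\|\phi\circ\Psi_0-\phi'\|_{L^2(\mu_\beta,\,K_\rho)}<\varepsilon/2$.

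Finally I would extend $\Psi_0$ by the identity across $\{x<\rho'\}$ to a diffeomorphism $\Psi:X_s\to X_s$ and collect the estimates. Splitting the norm over $K_\rho$ and $\{x<\rho\}$ gives $\|\phi\circ\Psi-\phi'\|_{x^\beta\mathcal H^0_{\rm b}(X)}^2<\varepsilon^2/4+\varepsilon^2/4<\varepsilon^2$. As for bounded distortion, $\Psi$ is the identity on the neighborhood $\{x<\rho'\}$ of $Y$, where it has distortion one and manifestly preserves the wedge structure, while on the compact manifold $K_{\rho'}$ the smooth diffeomorphism $\Psi_0$ fixes the boundary and has $D\Psi_0$ and $D(\Psi_0^{-1})$ uniformly bounded; hence $\Psi^*g$ and $g$ are uniformly equivalent on all of $X_s$. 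Thus $\Psi$ has bounded distortion and preserves the quasi-isometry class of wedge metrics, as required. (The distortion bound will in general blow up as $\varepsilon\to 0$, but this is harmless: for each fixed $\varepsilon$ any such $\Psi$ keeps the metric within its quasi-isometry class.)
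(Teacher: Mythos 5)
The paper offers no written proof of this proposition, only the pointers to \cite[Theorem 2.1]{kazdan1975existence} and \cite[Proposition 2.2]{de2022scalar}, and your argument is exactly the intended one: cut off a small neighborhood of $Y$ using the finiteness of the measure $x^{-2\beta}d{\rm vol}_{\rm b}$ (which, as you correctly extract from the assumed embedding, forces $\beta<0$ in effect), then run the Kazdan--Warner rearrangement on the compact core $K_\rho$ and extend by the identity near the singular stratum so that bounded distortion and preservation of the quasi-isometry class are automatic. The one slip is a quantifier order in the rearrangement step: the number $N$ of pieces depends on the oscillation parameter $\eta$, so the bound $\mu_\beta(A_j\setminus A_j')<\eta$ only gives total bad mass $N\eta$, which need not be small --- fix the partition (hence $N$) first and then require $\mu_\beta(A_j\setminus A_j')<\delta$ with $\delta$ chosen after $N$, $C_0$ and $\varepsilon$; with that standard adjustment the proof is correct.
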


		Now let $(X,g)$ be  a wedge space as in Theorem \ref{presc:ex}. 	By Theorem \ref{const:neg} and Remarks \ref{quasi:smooth} and \ref{rem:reg:1}, we may assume that $f\neq 2$ and $R_{g}= -{\bf 1}$,
		where  $\bf 1$ is the function identically equal to $1$. If $\epsilon>0$ is small enough, consider the smooth map
		\[
		A:B_\epsilon({\bf 1})\subset \mathcal D_{g,\beta_f}\to x^{\beta_f}\mathcal H^0_{{\rm b}}(X,d{\rm vol}_{\rm b})
		\]
		given by 
		\[
		A(u):=R_{u^{\frac{4}{n-2}} g}=-u^{-\alpha_n}\left(c_n^{-1}\Delta_{ g}u+u\right), \quad \alpha_n=\frac{n+2}{n-2}.
		\]
		Note that  $A({\bf 1})=-{\bf 1}$.
		
		\begin{proposition}\label{lin:isom}
			The linearization 	$\dot A_{\bf 1}:\mathcal D_{g,\beta_f}\to x^{\beta_f}\mathcal H^0_{{\rm b}}(X)$ is an isomorphism.
		\end{proposition}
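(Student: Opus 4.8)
The plan is to compute $\dot A_{\bf 1}$ explicitly, recognize it (up to a positive scalar) as the Fredholm operator (\ref{mg:fred}) already analyzed in Theorem \ref{self:ad:lap}, and then upgrade the ``Fredholm of index $0$'' conclusion to ``isomorphism'' by verifying injectivity.

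First I would differentiate $A$ at $\bf 1$. Writing $A(u)=-u^{-\alpha_n}\left(c_n^{-1}\Delta_g u+u\right)$, setting $u={\bf 1}+t\varphi$, and using $A({\bf 1})=-{\bf 1}$ together with $\Delta_g{\bf 1}=0$, a direct computation gives
\[
\dot A_{\bf 1}(\varphi)=-c_n^{-1}\Delta_g\varphi+(\alpha_n-1)\varphi.
\]
Since $\alpha_n-1=\tfrac{4}{n-2}$ and $c_n=\tfrac{n-2}{4(n-1)}$, this rewrites as
\[
\dot A_{\bf 1}=c_n^{-1}\left(-\Delta_g+\lambda\right)=c_n^{-1}\left(\mathcal C_{g,\beta_f}+\lambda\right),\qquad \lambda=c_n\,\frac{4}{n-2}=\frac{1}{n-1}>0,
\]
where $\mathcal C_{g,\beta_f}=-\Delta_g$ is the unbounded operator in (\ref{mg:unbound}). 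The key feature to highlight is that the resulting zeroth-order coefficient $\lambda$ is strictly positive whenever $n\geq 3$.

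Next I would invoke Theorem \ref{self:ad:lap}. Under the hypotheses on the link in Theorem \ref{presc:ex} (which, after the reductions recorded just before Proposition \ref{lin:isom}, amount to $f=1$ with cone angle at most $2\pi$, or $3\leq f\leq n-1$), that theorem asserts that $\mathcal C_{g,\beta_f}$ is essentially self-adjoint with self-adjoint domain $\mathcal D_{g,\beta_f}$, and that the map (\ref{mg:fred}), namely $\mathcal C_{g,\beta_f}+\lambda:\mathcal D_{g,\beta_f}\to x^{\beta_f}\mathcal H^0_{\rm b}(X)$, is Fredholm of index $0$ for every $\lambda\in\mathbb R$. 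As $\dot A_{\bf 1}$ differs from this map only by the nonzero scalar $c_n^{-1}$, it too is Fredholm of index $0$, so it remains only to prove that $\mathcal C_{g,\beta_f}+\lambda$ is injective. For this I would exploit the identification of the target space with the natural $L^2$ space: by (\ref{vol:rel}) and (\ref{inner:def}) one has $\langle u,v\rangle_{x^{\beta_f}\mathcal H^0_{\rm b}(X)}=\int_X uv\,d{\rm vol}_g$, so that $x^{\beta_f}\mathcal H^0_{\rm b}(X)=L^2(X,d{\rm vol}_g)$. If $(\mathcal C_{g,\beta_f}+\lambda)\varphi=0$ with $\varphi\in\mathcal D_{g,\beta_f}$, then essential self-adjointness gives $\varphi\in\mathcal D_{\rm min}(\mathcal C_{g,\beta_f})$, i.e.\ there exist $\varphi_m\in C^\infty_{\rm cpt}(X_s)$ with $\varphi_m\to\varphi$ and $\Delta_g\varphi_m$ convergent in $L^2(X,d{\rm vol}_g)$; integrating by parts on the smooth, compactly supported $\varphi_m$ and passing to the limit yields
\[
0=\left<(\mathcal C_{g,\beta_f}+\lambda)\varphi,\varphi\right>_{L^2}=\int_X|\nabla\varphi|^2\,d{\rm vol}_g+\lambda\int_X\varphi^2\,d{\rm vol}_g\geq\lambda\|\varphi\|_{L^2}^2.
\]
Because $\lambda>0$ this forces $\varphi=0$. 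Thus $\dot A_{\bf 1}$ is an injective Fredholm operator of index $0$, hence also surjective, and the bounded inverse theorem makes it an isomorphism.

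I expect the main (indeed, essentially the only) obstacle to be already absorbed into Theorem \ref{self:ad:lap}: the delicate wedge elliptic analysis producing essential self-adjointness and Fredholmness of $\mathcal C_{g,\beta_f}+\lambda$ is exactly what legitimizes the integration by parts in this singular setting, via the coincidence $\mathcal D_{g,\beta_f}=\mathcal D_{\rm min}(\mathcal C_{g,\beta_f})=\mathcal D_{\rm max}(\mathcal C_{g,\beta_f})$. Granted that input, the remaining steps are the elementary linearization computation and the positivity $\lambda=\tfrac{1}{n-1}>0$, which is precisely what converts the a priori index-zero statement into genuine invertibility.
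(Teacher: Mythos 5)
Your proposal is correct and follows essentially the same route as the paper: the same explicit computation yielding $\dot A_{\bf 1}=c_n^{-1}(-\Delta_g+\tfrac{1}{n-1})$, the appeal to Theorem \ref{self:ad:lap} for index $0$, and injectivity via an integration by parts legitimized by essential self-adjointness, concluding with the Fredholm alternative. The only difference is that you spell out the approximation by compactly supported functions that the paper leaves implicit.
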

		
		\begin{proof}
			A short computation gives
			\begin{equation}\label{iso:linear}
				\dot A_{\bf 1}=c_n^{-1}\left(-\Delta_{g}+\lambda_n\right), \quad \lambda_n=\frac{4c_n}{n-2}=\frac{1}{n-1},
			\end{equation}
			which is Fredholm of index $0$ by Theorem \ref{self:ad:lap} (after a possible rescaling of the circle link if $f=1$).
			Alternatively, we may use Theorem \ref{map:lap:cr:con} if $f=n-1$, $n>4$.
			Note that self-adjointness of $\mathcal C_{g,\beta_f}$ guarantees that integration by parts does not yield a contribution coming from the singular stratum. Thus, if $\dot A_{\bf 1}u=0$ we get 
			\[
			-\left\|\nabla_gu\right\|^2_{x^{\beta_f}\mathcal H^0_{\rm b}(X)}=\lambda_n\left\|u\right\|^2_{x^{\beta_f}\mathcal H^0_{\rm b}(X)},
			\]
			a contradiction unless $u=0$, which confirms injectivity of $\dot A_{\bf 1}$. The result now follows from Fredholm alternative.
		\end{proof}
		
		By the Inverse Function Theorem, there exists $\varepsilon>0$ and a neighborhood $U\subset \mathcal C_{g,\beta}$ of ${\bf 1}$ such that $A|_U:U\to B_{\varepsilon}(-{\bf 1})\subset x^{\beta_f}\mathcal H^0_{{\rm b}}(X)$ is a diffeomorphism. Also, if $F$ is the prescribed function then there exists $K>0$ such that  $K\min F<-1<K\max F$	
		and Proposition \ref{conf:dist} gives a diffeomorphism $\Psi:X_s\to X_s$ of bounded distortion such that $KF\circ\Psi\in B_{\varepsilon}(-{\bf 1})$, which implies that $KF\circ\Psi$ may be realized as the scalar curvature function of some wedge metric $\widetilde g$. Thus, $K^{1/2}(\Psi^{-1})^*\widetilde g$ is the required wedge metric (whose scalar curvature is $F$). 
		Finally, we note that this approach turns out to be effective enough to allow us to prove Theorem \ref{presc:ex} in the remaining conical case $n=4$. Indeed, it suffices to take $\beta=\beta_3$ and use that, again by Theorem \ref{self:ad:lap}, $\mathcal M_{g,\beta_3}$ as in (\ref{mg:unbound}) is essentially self-adjoint. 
		
		\begin{remark}\label{rem:reg:2}
			Again, the regularity theory in \cite[Section 3]{akutagawa2014yamabe} ensures that the conformal metric produced in Theorem \ref{presc:ex} lies in the same quasi-isometry class as the original wedge metric and hence is wedge as well; compare with Remark \ref{rem:reg:1}.
		\end{remark}

		\begin{remark}\label{semi:lin}
		Although surjectivity of (\ref{iso:linear}) would suffice to produce the metric $\widetilde g$ above via the Implicit Function Theorem, the finer isomorphism property shows that the space of conformal factors yielding wedge metrics whose scalar curvature functions are close to $-{\mathbf 1}$ may be locally parameterized by an open ball centered at ${\mathbf 1}\in\mathcal D_{g,\beta_f}$. In particular, local uniqueness for the associated non-linear problem holds. As another example of this sort of phenomenon, we note that with no restriction at all on the geometry of the link if $f\geq 3$, the perturbative method above may be used to check that 
		for any $v$ close enough to ${\bf 1}$ there is a (unique) $u$ close to ${\bf 1}$ solving the semi-linear elliptic equation
		\[
		\Delta_gu+u=vu^\alpha,\quad \alpha>1.
		\]
		\end{remark}
		
		\begin{remark}\label{holder}
			A routine procedure that we omit here allows us to 
			recast both Theorem \ref{self:ad:lap} and the proof of Theorem \ref{presc:ex} in the language of weighted H\"older spaces, which is more convenient to directly handle the regularity of solutions that we actually refrained from spelling out in the
			Sobolev analysis above.  
		\end{remark}

		\section{A topological obstruction: the proof of Theorem \ref{obs:res:karea}}\label{top:obst}
		
		We now look at topological obstructions to the existence of wedge metrics with (strictly) positive scalar curvature in a given wedge space. Our aim is to present the proof of Theorem \ref{obs:res:karea} and for this we will use a version of Index Theory as developed in \cite{albin2016index}, so we assume from now on that $X_s$ is spin. Thus, in the presence of a wedge
		metric $g$, we may consider the corresponding Dirac operator acting on the associated spin bundle
		\[
		\dirac:\Gamma(\mathbb S_X)\to \Gamma(\mathbb S_X). 
		\]  
		As before, we may define the weighted Sobolev scale $x^\beta\mathcal H_{\rm b}^{\sigma}(\mathbb S_X)$ formed by appropriate distributional spinors, so the question remains of determining for which values of $\beta$ the map
		\begin{equation}\label{dirac:bd}
			\dirac: x^\beta\mathcal H_{\rm b}^{\sigma}(\mathbb S_X)\to x^\beta\mathcal H_{\rm b}^{\sigma-1}(\mathbb S_X)
		\end{equation}
		is at least semi-Fredholm and essentially injective, as this is the first step in trying to establish good mapping properties. The argument below, leading to Theorem \ref{dirac:map:p}, adapts to the Dirac setting the approach adopted in Section \ref{wedge:ell:th} for Laplace-type operators and should be thought of as a variation of the reasoning in \cite{albin2016index}.  
		
		It follows from \cite[Lemma 2.2]{albin2016index} that, in the wedge region $U$,
		\begin{equation}\label{dirac:wed:reg}
			x{\dirac}={\mathfrak c}(\partial_x)\left(\mathsf D_x+\frac{f}{2}+{\dirac}_Z+x\dirac_Y\right)+o(x),
		\end{equation}
		where $\mathfrak c$ is Clifford product and ${\dirac}_Z$ (respectively, $\dirac_Y$) is the Dirac operator of $(Z,g_Z)$ (respectively, $(Y,g_Y)$). Replacing $\mathsf D_x$ by $\zeta$ in the right-hand side above and sending $x\to 0$, we see that the corresponding indicial symbol is 
		\[
		I_\dirac(\zeta)=\zeta+\frac{f}{2}+\dirac_Z.
		\]
		Since we aim at obstructing positive scalar curvature metrics, we may assume that $R_g|_U\geq 0$. It
		follows from (\ref{rg:exp}) that
		$R_{g_Z}\geq f(f-1)>0$ if $f>1$.  A well-known estimate \cite[Section 5.1]{friedrich2000dirac} then gives the spectral gap
		\begin{equation}\label{spec:gap}
			{\rm Spec}(\dirac_{Z})\cap \left(-\frac{f}{2},\frac{f}{2}\right)=\emptyset,
		\end{equation}
		known as the ``geometric Witt condition''.
		As explained in \cite{de2022scalar,de2022mapping}, at least in the conical case ($f=n-1$), this analysis suffices to guarantee that (\ref{dirac:bd}) is Fredholm of index $0$ whenever $\beta\in J_{n-1}:=(1-n,0)$ since (\ref{spec:gap}) prevents the appearance of indicial roots in this interval.  In the general wedge case, however, one should take into account the effects coming from the corresponding normal operator
		\[
		{\mathcal B}_\dirac(\vartheta)=\mathfrak c(\partial_s)\left({\mathsf D}_s+\frac{f}{2}+\dirac_Z\right)+s{{\bf i}}\mathfrak c(\vartheta), \quad \vartheta\in S^*\mathbb R_u^b.
		\]
		Luckily, the general mapping theory of wedge elliptic operators \cite{mazzeo1991elliptic,schulze1998boundary} tells us that the pattern to follow here is quite similar to the case of the Laplacian studied in Section \ref{wedge:ell:th}, which will allow us to easily transplant the previous analysis to the present context. Firstly, we check that when acting on the appropriate Sobolev scale on $\mathcal C_Z$, ${\mathcal B}_\dirac$ is injective for $\beta>-f$ and surjective for $\beta\in J_f:=(-f,0)$, which plays the role of the ``innermost'' non-indicial interval in this setting; see the proof of \cite[Lemma 3.10]{albin2016index}, but be aware that the indicial numerics there is different from ours essentially because their weighted Sobolev spaces are based on the volume element $d{\rm vol}_g$ instead of $d{\rm vol}_{\rm b}$; another approach to this issue appears in \cite[Section 6]{hartmann2018domain}. 
		We insist, however, that only the injectivity of ${\mathcal B}_\dirac$ on $J_f$ is really needed in the sequel, and this is an easy consequence of elementary Bessel asymptotics (as in the proof of Proposition \ref{norm:inj}).  
		Thus, (\ref{dirac:bd}) is semi-Fredholm and essentially injective for $\beta\in J_f$ by the appropriate analogue of Theorem \ref{fred:wedge}. Secondly, we combine this information (translated into the existence of a left generalized inverse
		for (\ref{dirac:bd}) in this range of weights) with the argument in the proof of \cite[Theorem 3.11]{albin2016index}, which is the Dirac version of \cite[Section 1]{albin2022cheeger}, to establish the following result.

		\begin{theorem}\label{dirac:map:p}
			Let $(X,g)$ be a spin wedge space with $R_g|_U\geq 0$. 
			If $f=1$ assume also that the cone angle is at most $2\pi$. Then  the unbounded map 
			\begin{equation}\label{dirac:self}
				\dirac:\Gamma_{\rm cpt}(X_s)\subset x^{\beta_f}\mathcal H_{\rm b}^{0}(\mathbb S_X)\to x^{\beta_f}\mathcal H_{\rm b}^{0}(\mathbb S_X)
			\end{equation}
			is	essentially self-adjoint and the corresponding self-adjoint extension is Fredholm.  
		\end{theorem}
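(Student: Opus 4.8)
The plan is to transplant the argument behind Theorem \ref{self:ad:lap} into the first-order Dirac setting, using as structural inputs the facts assembled just above. First I would record that, by Remark \ref{adjust} applied through Proposition \ref{self-ad-delta} — the identity $d{\rm vol}_{\rm b}=x^{2\beta_f}d{\rm vol}_g$ holding for $\dirac$ exactly as for $\Delta_g$, since $\dirac$ is formally self-adjoint with respect to $L^2(X,d{\rm vol}_g)$ — the unbounded map (\ref{dirac:self}) is \emph{symmetric} at the weight $\beta_f$. Next I would invoke the hypothesis $R_g|_U\geq 0$, which through (\ref{spec:gap}) yields the geometric Witt condition, so that $J_f=(-f,0)$ is the innermost non-indicial interval and the normal operator $\mathcal B_\dirac$ is injective for $\beta>-f$; by the appropriate analogue of Theorem \ref{fred:wedge} this provides a \emph{left} generalized inverse for the bounded map (\ref{dirac:bd}) for every $\beta\in J_f$.

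Assuming first $f>1$, the decisive numerical point to verify is that both the symmetry weight $\beta_f=-\frac{f+1}{2}$ and the once-shifted weight $1+\beta_f=\gamma_f=\frac{1-f}{2}$ lie in $J_f$: a direct check gives $-f<\beta_f<0$ and $-f<1+\beta_f<0$ precisely when $f>1$, and $J_f$ contains no indicial root. This is in fact cleaner than the Laplacian case, where the analogous demand $\{\beta_f,2+\beta_f\}\subset I_f$ forced $f\geq 3$; because $\dirac$ is only first order, the single-order shift makes $f>1$ suffice and no $f=2$ exception arises. I would then run the diagram argument behind (\ref{enjoy:ref}), feeding the left generalized inverse of (\ref{dirac:bd}) at weights in the open interval $(\beta_f,1+\beta_f)$ together with the elliptic regularity of the finite-dimensional kernel component in the spirit of \cite[Section 7]{mazzeo1991elliptic}. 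Gaining \emph{one} order of weighted regularity, this should produce
\[
\mathcal D_{\rm max}(\dirac_{\beta_f})\subset \bigcap_{0<\varepsilon<1} x^{1+\beta_f-\varepsilon}\mathcal H^1_{\rm b}(\mathbb S_X).
\]

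Mirroring the proof of (\ref{still:gkm}), I would then show this intersection is contained in $\mathcal D_{\rm min}(\dirac_{\beta_f})$: given $u$ in it, set $u_n:=x^{1/n}u$ and use the symmetry characterization of the minimal domain, writing $\langle\dirac u_n,v\rangle=\langle x^\varepsilon\dirac u_n,x^{-\varepsilon}v\rangle$ for $\varepsilon\in(0,1)$ and exploiting the bijection $\varepsilon\mapsto 1-\varepsilon$ of $(0,1)$ to pass to the limit and obtain $\langle\dirac u,v\rangle=\langle u,\dirac v\rangle$ for every $v\in\mathcal D_{\rm max}(\dirac_{\beta_f})$. This is exactly the reasoning in the proof of \cite[Theorem 3.11]{albin2016index}, itself the Dirac counterpart of \cite[Section 1]{albin2022cheeger}, and it rests on the characterizations of the minimal domain in \cite{gil2003adjoints,gil2013closure}. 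Combined with the previous inclusion it forces $\mathcal D_{\rm max}(\dirac_{\beta_f})=\mathcal D_{\rm min}(\dirac_{\beta_f})$, i.e. essential self-adjointness; Fredholmness of the resulting self-adjoint extension then follows, as in Theorem \ref{self:ad:lap}, from the compact inclusion $\mathcal D_{\rm max}(\dirac_{\beta_f})\subset x^{\beta_f}\mathcal H^0_{\rm b}(\mathbb S_X)$ supplied by the weighted regularity above and Remark \ref{scale:sob}.

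Finally I would dispatch the cone-edge case $f=1$ exactly as in the closing part of the proof of Theorem \ref{self:ad:lap}: here $\beta_1=-1$ and $1+\beta_1=0$ are the two endpoints of $J_1=(-1,0)$, and the assumption that the cone angle $L$ is at most $2\pi$ is precisely what keeps the pertinent weights non-indicial and $\mathcal B_\dirac$ injective in the relevant range, via elementary Bessel asymptotics on the circle link; the generalized-inverse and approximation steps then carry over unchanged. The hard part will be the weighted regularity gain for the maximal domain displayed above and its conversion into membership in the minimal domain — the only genuinely analytic ingredient, which leans on the edge regularity results of \cite{mazzeo1991elliptic,gil2003adjoints,gil2013closure}; everything else reduces to a transcription of the Laplacian argument together with the elementary verification that $\beta_f$ and $1+\beta_f$ sit inside $J_f$.
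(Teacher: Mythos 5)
Your proposal reproduces the paper's proof essentially verbatim: the paper likewise reduces everything to a transplantation of the argument for Theorem \ref{self:ad:lap}, resting on the observation that the first-order shift replaces $2+\beta_f$ by $1+\beta_f$ and that both $\beta_f$ and $1+\beta_f$ lie in $J_f=(-f,0)$ precisely when $f>1$, with $f=1$ handled as the limiting case $J_1=(\beta_1,1+\beta_1)$ in parallel with $f=3$ for the Laplacian. The one point you omit is that in the cone-edge case the validity of the spectral gap (\ref{spec:gap}) --- equivalently, the absence of indicial roots of $\dirac$ in $J_1$ --- requires not only the cone angle to be at most $2\pi$ but also the choice of the \emph{bounding} spin structure on the circle link (otherwise $0\in{\rm Spec}(\dirac_Z)$ places an indicial root at $-1/2\in J_1$ regardless of the angle), a subtlety the paper flags explicitly with references to \cite{chou1989criteria,albin2016index}.
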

		
		\begin{proof}
			As already advertised, this is just a matter of transplanting the analysis for the Laplacian in Section \ref{wedge:ell:th} to this Dirac setting.
			In particular, have Remark \ref{adjust} in mind and note that $2+\beta_f$ should be replaced by $1+\beta_f$, as dictated by the left-hand side of (\ref{dirac:wed:reg}).
			If $f>1$ then both $\beta_f$ and $1+\beta_f$ lie in $J_f$ and we may proceed exactly as in the proof of Theorem \ref{self:ad:lap} (where the corresponding assumption is $f>3$).
			Notice that 
			$f>1$ is also required here to justify the spectral estimate leading to (\ref{spec:gap}). Nonetheless, 
			(\ref{spec:gap})
			holds true in the cone-edge case $f=1$ if the cone angle is at most $2\pi$ and we choose the {\em bounding} spin structure on the link circle; see \cite{chou1989criteria,albin2016index} for discussions on this subtlety. In any case, with this extra information at hand, our reasoning above may also be transplanted to cover the limiting case $f=1$ since 
			$J_1=(-1,0)=(\beta_1,1+\beta_1)$ (compare again with the proof of Theorem \ref{self:ad:lap}, where the ``limiting'' case is $f=3$). 
		\end{proof}

		\begin{remark}\label{hart:2}
			As in Remark \ref{hart}, it follows from \cite{gil2013closure,hartmann2018domain} that the domain of the self-adjoint extension of $\dirac$ above is  $x^{1+\beta_f}\mathcal H^1_{\rm b}(\mathbb S_X)$, at least if $f> 1$. If $f=1$ and the cone angle is {\em strictly} less that $2\pi$ then $0=1+\beta_1$ is not an indicial root and the corresponding domain is  $\mathcal H^1_{\rm b}(\mathbb S_X)$.
		\end{remark}
		
		We now make use of the theory above to find topological obstructions to the existence of wedge metrics of positive scalar curvature.
		Let  us still denote by $\dirac$ the self-adjoint realization of (\ref{dirac:self}). If $n=2k$ is even, it is well -known that $\dirac$ splits as
		\[
		\dirac=
		\left(
		\begin{array}{cc}
			0 & \dirac^- \\
			\dirac^+ & 0
		\end{array}
		\right)
		\]
		where $\dirac^\pm$ are the realizations of the {\em chiral} Dirac operators
		\[
		\dirac^\pm:\Gamma_{\rm cpt}(\mathbb S^\pm_X)\to \Gamma_{\rm cpt}(\mathbb S^\mp_X)
		\]
		corresponding to the chiral decomposition $\mathbb S_X=\mathbb S^+_X\oplus\mathbb S^-_X$. It then follows from Theorem \ref{dirac:map:p} that these chiral Dirac operators are Fredholm and adjoint to each other, so it makes sense to consider the associated {index}
		\[
		{\rm ind}\,\dirac^+=\dim\ker\dirac^+-\dim\ker\dirac^-. 
		\] 
		More generally, if $\mathcal E$ is a Hermitian vector bundle over $X_s$ endowed with a compatible connection, we may consider the 
		{\em twisted} Dirac operator
		\[
		\dirac\otimes{\mathcal E}=
		\left(
		\begin{array}{cc}
			0 & \dirac^-\!\!\otimes{\mathcal E} \\
			\dirac^+\!\!\otimes{\mathcal E} & 0
		\end{array}
		\right)
		\]
		In general, twisting with $\mathcal E$ spoils self-adjointness since  $\dirac_{g_Z}\!\otimes\mathcal E|_Z$ should somehow appear in the expressions of  the corresponding indicial symbol and normal operator, thus compromising the analysis above. However,
		if $\mathcal E$ is {\em admissible} in the sense that $\mathcal E|_U$ is trivial (and endowed with a flat connection) then  $\dirac\otimes\mathcal E|_U$ is a sum of $r:={\rm rank}\,\mathcal E$ copies of $\dirac|_U$ and, as explained in the proof of \cite[Proposition 4.1]{de2022scalar}, Theorem \ref{dirac:map:p} and Remark \ref{hart:2} hold {\em verbatim} for $\dirac\otimes{\mathcal E}$,
		so we can define the corresponding index
		\[
		{\rm ind}\,\dirac^+\!\!\otimes{\mathcal E}=\dim\ker\dirac^+\!\!\otimes{\mathcal E}-\dim\ker\dirac^-\!\!\otimes{\mathcal E}.
		\]
		
		It turns out that this fundamental invariant can be explicitly computed  in terms of the underlying geometry by means of heat asymptotics \cite{chou1985dirac,lesch1997differential,albin2016index}. Indeed, if  $\Theta^r$ is the trivial bundle of rank $r$ and $\widetilde{\mathcal E}=\mathcal E-\Theta^r$ is the associated virtual bundle then \cite[Main Theorem]{albin2016index} tells us that 
		\begin{equation}\label{ind:albin}
			{\rm ind}\,\dirac^+\!\!\otimes{\widetilde{\mathcal E}}=r\,{\rm ind}\,\dirac^++\int_{X_s}\widehat A(TX_s)\wedge{{\rm ch}\,\widetilde{\mathcal E}},
		\end{equation}	
		where 
		\[
		{\rm ind}\,\dirac^+=\int_{X_s}\widehat A(TX_s)+\int_Y\widehat A(TY)\left(-\frac{1}{2}\eta_{\dirac_Z}+\int_Z\mathcal T\widehat A(TX_s)\right),
		\]
		$\widehat A$ is the $\widehat A$-class, $\mathcal T$ means transgression, $\eta_{\dirac_Z}$ is the eta invariant of $\dirac_Z$ and ${{\rm ch}\,\widetilde{\mathcal E}}$ is the Chern character of $\widetilde{\mathcal E}$. 
		
		According to \cite[Theorem 1.3]{albin2016index}, if $g$ has (strictly) positive scalar curvature everywhere then ${\rm ind}\,\dirac^+=0$, so that (\ref{ind:albin}) reduces to 
		\begin{equation}\label{ind:form:sp}
			{\rm ind}\,\dirac^+\!\!\otimes{\widetilde{\mathcal E}}=\int_{X_s}\widehat A(TX_s)\wedge{{\rm ch}\,\widetilde{\mathcal E}},
		\end{equation}
		for any admissible bundle $\mathcal E$. Notice that $\widehat A(TX_s)$ only contributes to the right-hand side with ``lower order'' terms, which aligns with the comments in Remark \ref{rosen}.
		We now introduce the class of wedge spaces to which Theorem \ref{obs:res:karea} applies; compare with \cite[Section 3]{de2022scalar}, where the conical case is discussed, and also with \cite{gromov1996positive}, where this notion was originally conceived in the smooth category. 
		
		\begin{definition}\label{k:area:def}
			A wedge space $(X,g)$ has {\em infinite} $K$-{\em area} if for any $\epsilon>0$ there exists an admissible, $\epsilon$-flat bundle over $X$ which is homologically nontrivial in the sense that at least one of its Chern numbers does not vanish. 
		\end{definition}
		
		A key point here is that having infinite $K$-area is a {\em quasi-isometric} property of the wedge space $(X,g)$. In any case, with (\ref{ind:form:sp}) at hand and proceeding exactly as in \cite[Section 5.1]{de2022scalar}, the proof of Theorem \ref{obs:res:karea} follows immediately.
		
		\bibliographystyle{alpha}
		\bibliography{yamabe-edge}		

\begin{thebibliography}{ALMP18}

\bibitem[ACM14]{akutagawa2014yamabe}
Kazuo Akutagawa, Gilles Carron, and Rafe Mazzeo.
\newblock The {Y}amabe problem on stratified spaces.
\newblock {\em Geometric and Functional Analysis}, 24(4):1039--1079, 2014.

\bibitem[AGR16]{albin2016index}
Pierre Albin and Jesse Gell-Redman.
\newblock The index of {D}irac operators on incomplete edge spaces.
\newblock {\em SIGMA. Symmetry, Integrability and Geometry: Methods and
  Applications}, 12:089, 2016.

\bibitem[ALMP12]{albin2012signature}
Pierre Albin, {\'E}ric Leichtnam, Rafe Mazzeo, and Paolo Piazza.
\newblock The signature package on {W}itt spaces.
\newblock {\em Annales scientifiques de l'{\'E}cole normale sup{\'e}rieure},
  45(2):241--310, 2012.

\bibitem[ALMP18]{albin2018hodge}
Pierre Albin, Eric Leichtnam, Rafe Mazzeo, and Paolo Piazza.
\newblock Hodge theory on {C}heeger spaces.
\newblock {\em Journal f{\"u}r die reine und angewandte Mathematik (Crelles
  Journal)}, 2018(744):29--102, 2018.

\bibitem[AM22]{akutagawa2022non}
Kazuo Akutagawa and Ilaria Mondello.
\newblock Non-existence of {Y}amabe minimizers on singular spheres.
\newblock {\em The Journal of Geometric Analysis}, 32(7):1--20, 2022.

\bibitem[ARS22]{albin2022cheeger}
Pierre Albin, Fr{\'e}d{\'e}ric Rochon, and David Sher.
\newblock A {C}heeger--{M}{\"u}ller theorem for manifolds with wedge
  singularities.
\newblock {\em Analysis \& PDE}, 15(3):567--642, 2022.

\bibitem[Bes07]{besse2007einstein}
Arthur~L Besse.
\newblock {\em Einstein manifolds}.
\newblock Springer, 2007.

\bibitem[BH23]{bar2023boundary}
Christian B{\"a}r and Bernhard Hanke.
\newblock Boundary conditions for scalar curvature.
\newblock In {\em Perspectives in Scalar Curvature}, pages 325--377. World
  Scientific, 2023.

\bibitem[BPR21]{botv2021positive}
Boris Botvinnik, Paolo Piazza, and Jonathan Rosenberg.
\newblock Positive scalar curvature on spin pseudomanifolds: the fundamental
  group and secondary invariants.
\newblock {\em SIGMA. Symmetry, Integrability and Geometry: Methods and
  Applications}, 17:39, 2021.

\bibitem[Cho85]{chou1985dirac}
Arthur~W Chou.
\newblock The {D}irac operator on spaces with conical singularities and
  positive scalar curvatures.
\newblock {\em Transactions of the American Mathematical Society},
  289(1):1--40, 1985.

\bibitem[Cho89]{chou1989criteria}
Arthur~W Chou.
\newblock Criteria for selfadjointness of the {D}irac operator on
  pseudomanifolds.
\newblock {\em Proceedings of the American Mathematical Society},
  106(4):1107--1116, 1989.

\bibitem[CLT21]{cheng2021singular}
Man-Chuen Cheng, Man-Chun Lee, and Luen-Fai Tam.
\newblock Singular metrics with negative scalar curvature.
\newblock {\em arXiv preprint arXiv:2107.08592}, 2021.

\bibitem[CV19]{cruz2019prescribing}
Tiarlos Cruz and Feliciano Vit{\'o}rio.
\newblock Prescribing the curvature of {R}iemannian manifolds with boundary.
\newblock {\em Calculus of Variations and Partial Differential Equations},
  58(4):1--19, 2019.

\bibitem[dL22a]{de2022mapping}
Levi~Lopes de~Lima.
\newblock Mapping properties of geometric elliptic operators in conformally
  conical spaces: an introduction with examples.
\newblock {\em Matem\'atica Contempor\^anea}, 50(2):152--184, 2022.

\bibitem[dL22b]{de2022scalar}
Levi~Lopes de~Lima.
\newblock The scalar curvature in conical manifolds: some results on existence
  and obstructions.
\newblock {\em Annals of Global Analysis and Geometry}, 61(3):641--661, 2022.

\bibitem[Don12]{donaldson2012kahler}
Simon~K. Donaldson.
\newblock K{\"a}hler metrics with cone singularities along a divisor.
\newblock In {\em Essays in Mathematics and its Applications: In Honor of
  Stephen Smale{\textasciiacute}s 80th Birthday}, pages 49--79. Springer Berlin
  Heidelberg, 2012.

\bibitem[ES12]{egorov2012pseudo}
Iouri Egorov and Bert-Wolfgang Schulze.
\newblock {\em Pseudo-differential operators, singularities, applications},
  volume~93.
\newblock Birkh{\"a}user, 2012.

\bibitem[Fri00]{friedrich2000dirac}
Thomas Friedrich.
\newblock {\em Dirac operators in Riemannian geometry}, volume~25.
\newblock American Mathematical Soc., 2000.

\bibitem[Gaf51]{gaffney1951harmonic}
Matthew~P Gaffney.
\newblock The harmonic operator for exterior differential forms.
\newblock {\em Proceedings of the National Academy of Sciences}, 37(1):48--50,
  1951.

\bibitem[GKM13]{gil2013closure}
Juan~B Gil, Thomas Krainer, and Gerardo~A Mendoza.
\newblock On the closure of elliptic wedge operators.
\newblock {\em Journal of Geometric Analysis}, 23(4):2035--2062, 2013.

\bibitem[GL80a]{gromov1980classification}
Mikhael Gromov and H.~Blaine Lawson.
\newblock The classification of simply connected manifolds of positive scalar
  curvature.
\newblock {\em Annals of Mathematics}, pages 423--434, 1980.

\bibitem[GL80b]{gromov1980spin}
Mikhael Gromov and H~Blaine Lawson.
\newblock Spin and scalar curvature in the presence of a fundamental group.
  {I}.
\newblock {\em Annals of Mathematics}, pages 209--230, 1980.

\bibitem[GL83]{gromov1983positive}
Mikhael Gromov and H.~Blaine Lawson.
\newblock Positive scalar curvature and the {D}irac operator on complete
  {R}iemannian manifolds.
\newblock {\em Publications Math{\'e}matiques de l'IH{\'E}S}, 58:83--196, 1983.

\bibitem[GM03]{gil2003adjoints}
Juan~B Gil and Gerardo~A Mendoza.
\newblock Adjoints of elliptic cone operators.
\newblock {\em American Journal of Mathematics}, 125(2):357--408, 2003.

\bibitem[Gri01]{grieser2001basics}
Daniel Grieser.
\newblock Basics of the $b$-calculus.
\newblock In {\em Approaches to singular analysis}, pages 30--84. Springer,
  2001.

\bibitem[Gro96]{gromov1996positive}
Mikhael Gromov.
\newblock Positive curvature, macroscopic dimension, spectral gaps and higher
  signatures.
\newblock In {\em Functional Analysis on the Eve of the 21st Century Volume
  II}, pages 1--213. Springer, 1996.

\bibitem[Hit74]{hitchin1974harmonic}
Nigel Hitchin.
\newblock Harmonic spinors.
\newblock {\em Advances in Mathematics}, 14(1):1--55, 1974.

\bibitem[HK98]{hodgson1998rigidity}
Craig~D Hodgson and Steven~P Kerckhoff.
\newblock Rigidity of hyperbolic cone-manifolds and hyperbolic {D}ehn surgery.
\newblock {\em Journal of Differential Geometry}, 48(1):1--59, 1998.

\bibitem[HLV18]{hartmann2018domain}
Luiz Hartmann, Matthias Lesch, and Boris Vertman.
\newblock On the domain of {D}irac and {L}aplace type operators on stratified
  spaces.
\newblock {\em Journal of Spectral Theory}, 8(4):1295--1348, 2018.

\bibitem[KW75]{kazdan1975existence}
Jerry~L Kazdan and Frank~W Warner.
\newblock Existence and conformal deformation of metrics with prescribed
  {G}aussian and scalar curvatures.
\newblock {\em Annals of Mathematics}, pages 317--331, 1975.

\bibitem[Lee06]{lee2006fredholm}
John~M. Lee.
\newblock {\em Fredholm operators and {E}instein metrics on conformally compact
  manifolds}, volume~13.
\newblock American Mathematical Soc., 2006.

\bibitem[Les97]{lesch1997differential}
Matthias Lesch.
\newblock {\em Differential operators of Fuchs type, conical singularities, and
  asymptotic methods}, volume 136 of {\em Teubner Texte zur Mathematik}.
\newblock Teubner--Verlag, 1997.

\bibitem[Lic63]{lichnerowicz1963spineurs}
Andr{\'e} Lichnerowicz.
\newblock Spineurs harmoniques.
\newblock {\em CR Acad. Sci. Paris S{\'e}rie AB}, 257:7--9, 1963.

\bibitem[LM19]{li2019positive}
Chao Li and Christos Mantoulidis.
\newblock Positive scalar curvature with skeleton singularities.
\newblock {\em Mathematische Annalen}, 374(1):99--131, 2019.

\bibitem[LP87]{leeparker1987}
John~M Lee and T~Parker.
\newblock The {Y}amabe problem.
\newblock {\em Bulletin of AMS}, 17(1):37--91, 1987.

\bibitem[Maz91]{mazzeo1991elliptic}
Rafe Mazzeo.
\newblock Elliptic theory of differential edge operators {I}.
\newblock {\em Communications in Partial Differential Equations},
  16(10):1615--1664, 1991.

\bibitem[McK70]{mckean1970upper}
Henry~P McKean.
\newblock An upper bound to the spectrum of {$\Delta$} on a manifold of
  negative curvature.
\newblock {\em Journal of Differential Geometry}, 4(3):359--366, 1970.

\bibitem[Mel93]{melrose1993atiyah}
Richard Melrose.
\newblock {\em The Atiyah-Patodi-Singer index theorem}.
\newblock AK Peters/CRC Press, 1993.

\bibitem[MM91]{ma1991laplacian}
Xiaoyun Ma and Robert~C McOwen.
\newblock The {L}aplacian on complete manifolds with warped cylindrical ends.
\newblock {\em Commum. Partial Diff. Equation}, 16(10):1583--1614, 1991.

\bibitem[MM11]{mazzeo2011infinitesimal}
Rafe Mazzeo and Gr{\'e}goire Montcouquiol.
\newblock Infinitesimal rigidity of cone-manifolds and the {S}toker problem for
  hyperbolic and {E}uclidean polyhedra.
\newblock {\em Journal of Differential Geometry}, 87(3):525--576, 2011.

\bibitem[Mon17]{mondello2017local}
Ilaria Mondello.
\newblock The local {Y}amabe constant of {E}instein stratified spaces.
\newblock {\em Annales de l'Institut Henri Poincar{\'e} C, Analyse non
  lin{\'e}aire}, 34(1):249--275, 2017.

\bibitem[MP96]{mazzeo1996construction}
Rafe Mazzeo and Frank Pacard.
\newblock A construction of singular solutions for a semilinear elliptic
  equation using asymptotic analysis.
\newblock {\em Journal of Differential Geometry}, 44(2):331--370, 1996.

\bibitem[MS91]{mazzeo1991conformally}
Rafe Mazzeo and Nathan Smale.
\newblock Conformally flat metrics of constant positive scalar curvature on
  subdomains of the sphere.
\newblock {\em Journal of Differential Geometry}, 34(3):581--621, 1991.

\bibitem[MV12]{mazzeo2012analytic}
Rafe Mazzeo and Boris Vertman.
\newblock Analytic torsion on manifolds with edges.
\newblock {\em Advances in Mathematics}, 231(2):1000--1040, 2012.

\bibitem[Qin03]{qing2003rigidity}
Jie Qing.
\newblock On the rigidity for conformally compact {E}instein manifolds.
\newblock {\em International Mathematics Research Notices},
  2003(21):1141--1153, 2003.

\bibitem[Sch84]{schoen1984conformal}
Richard Schoen.
\newblock Conformal deformation of a riemannian metric to constant scalar
  curvature.
\newblock {\em Journal of Differential Geometry}, 20(2):479--495, 1984.

\bibitem[Sch98]{schulze1998boundary}
Bert-Wolfgang Schulze.
\newblock {\em Boundary value problems and singular pseudo-differential
  operators}.
\newblock Pure and Applied Mathematics Interscience Series of Texts,
  Monographs, and Tracks. John Wiley, 1998.

\bibitem[SS01]{schrohe2001ellipticity}
Elmar Schrohe and J{\"o}rg Seiler.
\newblock Ellipticity and invertibility in the cone algebra on
  ${L}^p$-{S}obolev spaces.
\newblock {\em Integral Equations and Operator Theory}, 41(1):93--114, 2001.

\bibitem[SY79]{schoen1979structure}
Richard Schoen and Shing-Tung Yau.
\newblock On the structure of manifolds with positive scalar curvature.
\newblock {\em Manuscripta Mathematica}, 28(1):159--183, 1979.

\end{thebibliography}
	\end{document}